\documentclass{amsart}
\usepackage{amsmath,amssymb,amsthm}
\usepackage{comment,cleveref,enumerate}
\usepackage[all]{xy}

%\crefname{section}{\S}{\S\S}
\crefname{equation}{equation}{equations}
\crefname{section}{Section}{Sections}
\crefname{chapter}{Chapter}{Chapters}

\newtheorem{lem}{Lemma}[section]
\newtheorem{thm}[lem]{Theorem}

\newtheorem{cor}[lem]{Corollary}
\newtheorem{prop}[lem]{Proposition}

\theoremstyle{definition}
\newtheorem{dfn}[lem]{Definition}

\newtheorem{question}[lem]{Question}
\newtheorem{notation}[lem]{Notation}
\newtheorem{ex}[lem]{Example}
\newtheorem{rmk}[lem]{Remark}

% fancy looking letters

\newcommand{\ZZ}{\mathbb{Z}}

\newcommand{\NN}{\mathbb{N}}
\newcommand{\CC}{\mathbb{C}}
\newcommand{\FF}{\mathbb{F}}

\newcommand{\PP}{\mathbb{P}}

% Some regularly used operator shortcuts	

 % Image
  % Cokernel
\newcommand{\Spec}{\operatorname{Spec}}

    % Divisor of a rational function
  % Galois group
   % Support
        % Picard Groups
       % Jacobian Variety
    % Fraction field
  % codimension
\newcommand{\Aut}{\operatorname{Aut}}   % Automorphism group

\newcommand{\PGL}{\operatorname{PGL}}
\newcommand{\PSL}{\operatorname{PSL}}
\newcommand{\cha}{\operatorname{char}}

\newcommand{\pie}{\pi_1^{\acute{e}t}}
\newcommand{\e}{\'{e}}

\newcommand{\benum}{\begin{enumerate}[(1)]}
\newcommand{\eenum}{\end{enumerate}}

\numberwithin{equation}{section}
%=========================================================
%\usepackage{amsmath,amssymb,amsthm,graphicx, mathrsfs, latexsym, amscd}
%\usepackage{mathrsfs,sectsty,comment,cleveref,titlesec,enumerate}
\usepackage{comment,cleveref,enumerate}

\begin{document}

% \title[short text for running head]{full title}
\title{Galois branched covers with fixed ramification locus}

\author{Ryan Eberhart}
\address{Department of Mathematics, The Pennsylvania State university, University Park, PA 16802}
%\curraddr{}
%\email{}
%\thanks{}

\subjclass[2010]{Primary 12F10; Secondary 14D05, 14L30}

\date{}

\dedicatory{}

\begin{abstract}
We examine conditions under which there exists a non-constant family of Galois branched covers of curves over an algebraically closed field $k$ of fixed degree and fixed ramification locus, under a notion of equivalence derived from considering linear series on a fixed smooth proper source curve $X$. We show such a family exists precisely when the following conditions are satisfied: $\cha(k)=p>0$, $X$ is isomorphic to $\PP^1_k$, there is a unique ramification point, and the Galois group is $(\ZZ/p\ZZ)^m$ for some integer $m>0$.
\end{abstract}

\maketitle

%    Text of article.
%%%%%%%%%%%%%%%%%%%
%%%%%%%%%%%%%%%%%%%
\section{Introduction}\label{introsect}
This paper concerns the problem of determining the existence of a non-constant family of branched covers of curves, after prescribing some attributes of the covers. Let $k$ be an algebraically closed field. To avoid awkward phrasing, in this section always assume that families are over a connected base scheme. When considering covers of a fixed target curve $Y$, one generally considers two covers $f_i:X_i\rightarrow Y$ equivalent if there is a commutative diagram:

$$\xymatrix{
X_1\ar[rr]^\cong \ar[dr]_{f_1} & & X_2\ar[dl]^{f_2} \\
& Y &
}$$

In this situation, if one specifies that the branch locus be contained in a finite set of closed points $S\subseteq Y$, properties of the \e tale fundamental group $\pie(Y\setminus S)$ can be used to establish results concerning families of covers with branch locus contained in S. For instance, if $\cha(k)=0$, then $\pie(Y\setminus S)$ is finitely generated by Riemann's Existence Theorem. This implies that there is no non-constant family of branched covers with target $Y$ of fixed degree and branch locus contained in $S$. Furthermore, by \cite[Expos\e\ XIII Corollaire 2.12]{sga1} the tame \e tale fundamental group of $Y\setminus S$ is finitely generated. Thus, if $\cha(k)>0$, there is no non-constant family of tamely ramified branched covers with target $Y$ of fixed degree and branch locus contained in $S$.

If instead we begin from the perspective of linear series, or finite index subfields of the function field of a fixed curve, we obtain a different notion of equivalence, as we now explain. Fix a smooth proper curve $X$ over $k$. A \textit{linear series} on $X$ is a linear subspace $V\subseteq H^0(X,\mathcal{L})$ where $\mathcal{L}$ is a line bundle on $X$. Two sections $s_1$ and $s_2$ of a line bundle that have no common zeroes determine a map $X\rightarrow\PP^1_k$, taking $P\in X$ to $(s_1(P):s_2(P))$. Conversely, a map to $\PP^1_k$ yields two sections of a line bundle with no common zeroes, by pulling back generators for the sheaf $\mathcal{O}(1)$. Hence, in studying a linear series $V$, it is natural to investigate planes inside $V$. One can easily see that choosing a different basis for the plane corresponds to post-composing the map determined by $s_1$ and $s_2$ with a fractional linear transformation (i.e.\ an automorphism of the target space, $\PP^1_k$). Therefore, planes inside a linear series 
correspond to maps 
$X\rightarrow\PP^1$ up to post-composition with fractional linear transformations.

Motivated by this discussion, we will henceforth consider two branched covers $f_i:X \to Y_i$ to be \textit{equivalent} if there is a commutative diagram:

$$\xymatrix{
& X\ar[ld]_{f_1}\ar[rd]^{f_2} & \\
Y_1\ar[rr]^\cong & & Y_2
}$$
This notion of equivalence is referred to as the \textit{linear series perspective} in \cite{ossper}. \cref{subfields} below explains the correspondence between branched covers with source $X$ under this equivalence and subfields of $\kappa(X)$.

In contrast with the situation of covers of a fixed base, we are fixing the source and allowing the target to vary. Therefore instead of fixing the branch locus, we consider covers with a fixed ramification locus on the source. In analogy with the situation of covers of a fixed target curve, one is led to consider the following:

\begin{question}\label{quest}
Let $X$ be a smooth proper curve over an algebraically closed field $k$ and $S$ a finite set of closed points of $X$. Under what conditions does there exist a non-constant family of maps with source $X$ of fixed degree and ramification locus $S$?
\end{question}

Based upon the results concerning covers of a fixed target curve, one would expect no such tame families to exist. However, as noted by Brian Osserman, the following example illustrates that this is not the case:

\begin{ex}\label{tameex} (\cite[Example 5.6]{ossp})
Consider the family of maps $\PP^1_k\rightarrow \PP^1_k$ with $k$ a field of characteristic $p>2$ given by $y=x^{p+2}+tx^p+x$ with parameter $t$. For every value of $t$, the map is tamely ramified at $\infty$ and the $(p+1)^{st}$ roots of $-1/2$ and is \e tale elsewhere. When fixing the source $\PP^1_k$, no distinct values of $t$ produce maps which are equivalent after an isomorphism of the target $\PP^1_k$ (the linear series perspective sense of equivalence which we are considering).
\end{ex}

To summarize, in the notion of equivalence arising from considering branched covers of a fixed target curve, no non-constant families of tame covers with fixed degree and branch locus exist. However, when using the equivalence arising from considering linear series on a fixed source curve, non-constant families of tame covers of fixed degree and ramification locus can exist, such as the non-Galois family from \cref{tameex}. When such families exist is an open problem. In the case of Galois covers, we show that non-constant families must be wild in \cref{notamegalcor}. Moreover, \cref{maingalthm} provides the precise conditions under which such a family can exist. The restrictiveness of these conditions shows that Galois families are rarer under this sense of equivalence than the usual one.

\medskip

{\it Structure of the paper:} In \cref{familysection}, we define a family for branched covers under the above notion of equivalence. The main result of this section is \cref{isotrivthm}, which allows us to translate statements concerning the number of equivalence classes to statements about the existence of families. \cref{nongalsect} provides a survey of results in the situation where the covers are not assumed to be Galois.
We restrict our attention to covers which are Galois in \cref{galsect}. Equivalence classes of such covers with source $X$ are shown to correspond to certain group actions on $X$. By examining such group actions, we prove the main result of this paper, \cref{maingalthm}. This theorem completely answers \cref{quest} in the case where the covers are required to be Galois.

\medskip

{\it Terminology and Notation:} For any integral scheme $X$, $\kappa(X)$ will denote the function field of $X$. In this paper we consider smooth connected curves over an algebraically closed field $k$. A morphism $f:X\rightarrow Y$ of such curves is a \textit{branched cover} if it is finite and generically \e tale. We will occasionally use the term \textit{map} as shorthand for a branched cover. A branched cover $f:X\rightarrow Y$ is \textit{Galois} if $\Aut(X/Y)$, the group of $k$-automorphisms of $X$ commuting with $f$, has cardinality equal to the degree of $f$. Equivalently, $f:X\rightarrow Y$ is \textit{Galois} if the induced field extension $\kappa(X)/\kappa(Y)$ is Galois. A Galois cover $f:X\rightarrow Y$ is \textit{$G$-Galois} if $\Aut(X/Y)$ is isomorphic to $G$.

\medskip

The content of this paper is adapted from a portion of the author's PhD thesis at the University of Pennsylvania, under the direction of David Harbater.

%%%%%%%%%%%%%%%%%%%
%%%%%%%%%%%%%%%%%%%

\section{Families}\label{familysection}
In formulating the definition of a family, the essential point we wish to capture is that in our notion of equivalence the source $X$ is fixed but the target is allowed to vary.

\begin{dfn}\label{famdfn}
Let $k$ be an algebraically closed field, $X$ a smooth proper curve over $k$, and $S$ a $k$-scheme. A \textit{family of degree d maps with source X over $S$} is an $S$-morphism $f:X_S\rightarrow \mathcal{Y}$ where $\mathcal{Y}$ is a flat $S$-scheme and the geometric fibers of $f$ are degree $d$ morphisms between smooth proper curves. A family is \textit{constant} if the fibers over all $k$-points of $S$ are equivalent.
\end{dfn}

Previous results in the literature, as well as our method of proof for \cref{maingalthm}, show that there are finitely many maps satisfying a set of conditions. Therefore we need a result to translate between statements concerning the number of equivalence classes of maps and statements about families:

\begin{thm}\label{isotrivthm}
Let $k$ be an algebraically closed field, $X$ a smooth proper curve over $k$, $S$ a connected $k$-variety, $\mathcal{Y}$ an $S$-scheme, and $f:X_S\rightarrow \mathcal{Y}$ a family of degree d maps over $S$. If the maps over $k$-points of $S$ lie in finitely many equivalence classes, then $f$ is constant.
\end{thm}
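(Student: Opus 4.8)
The strategy is to fix a branched cover $g\colon X\to Y_0$ and show that the locus in $S$ over which the fibres of $f$ are equivalent to $g$ is closed. Choose representatives $g_1\colon X\to Y_1,\dots,g_r\colon X\to Y_r$ of the finitely many equivalence classes realised by the fibres $f_s$ over $k$-points $s\in S(k)$, and put
\[
Z_i \;=\; \{\, s\in S \;:\; f_{\overline s}\ \text{is equivalent to}\ (g_i)_{\overline s}\ \text{over}\ \overline{\kappa(s)}\,\}.
\]
Granting that each $Z_i$ is closed, their union is a closed subset of $S$ containing every closed point of $S$, hence all of $S$ (as $S$ is Jacobson); and the $Z_i$ are pairwise disjoint, because if $g_i$ and $g_j$ became equivalent over a field extension of $k$, then, the scheme of $X$-isomorphisms from $Y_i$ to $Y_j$ being of finite type over the algebraically closed $k$, they are already equivalent over $k$, forcing $i=j$. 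Thus $S$ is a finite disjoint union of closed subsets; connectedness forces $S=Z_i$ for one $i$, which says precisely that every $f_s$ ($s\in S(k)$) is equivalent to $g_i$, i.e.\ $f$ is constant.

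It remains to prove each $Z_i$ closed, and for this I show it is constructible and stable under specialization. Constructibility: $X$ is proper over $k$ and $f\colon X_S\to\mathcal Y$ is finite, so $\mathcal Y\to S$ is proper and flat; by the standard representability of Hom-functors, the isomorphisms $\mathcal Y_T\xrightarrow{\ \sim\ }Y_i\times_k T$ carrying $f_T$ to $(g_i\times\mathrm{id})_T$ are classified by an $S$-scheme $I_i$ of finite type, and $Z_i$ is the image of $I_i\to S$, hence constructible by Chevalley's theorem.

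Stability under specialization may be checked on traits. Let $R$ be a discrete valuation ring with a morphism $\Spec R\to S$ whose generic point $\eta$ lies in $Z_i$ and whose closed point maps to $w\in S$; after replacing $R$ by a discrete valuation ring dominating it inside a finite extension of $F:=\Frac(R)$ (which does not change $w$), the equivalence of $f_\eta$ with $(g_i)_\eta$ is witnessed by an isomorphism over $F$, i.e.\ there is an equality of subfields
\[
\kappa(\mathcal Y_R)\;=\;g_i^*\kappa(Y_i)\otimes_k F \qquad\text{inside}\qquad \kappa(X_R)=\kappa(X)\otimes_k F,
\]
where $\mathcal Y_R=\mathcal Y\times_S\Spec R$ and $X_R=X\times_k R$. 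Because the geometric fibres of $\mathcal Y\to S$ are smooth proper curves (\cref{famdfn}), the special fibre of $\mathcal Y_R$ is integral, and localizing at its generic point $\zeta$ produces a discrete valuation ring $\mathcal O_\zeta\subseteq\kappa(\mathcal Y_R)$. The key point is a comparison of valuations. On one hand, since $X_R$ is the \emph{constant} model $X\times_k R$, the discrete valuation of $\kappa(X_R)$ attached to the generic point of its special fibre is trivial on $\kappa(X)$ and restricts to $v_R$ on $F$, so its restriction to the subfield $g_i^*\kappa(Y_i)\otimes_k F=\kappa(\mathcal Y_R)$ is the analogous ``constant'' valuation of $\kappa(Y_i)\otimes_k F$. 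On the other hand, finiteness of $f\colon X_R\to\mathcal Y_R$ forces the generic point of the special fibre of $X_R$ to map to $\zeta$, and the resulting local homomorphism of discrete valuation rings forces $\mathcal O_\zeta$ to equal that restricted valuation ring. Passing to residue fields --- compatibly inside the residue field of the special fibre of $X_R$ --- now shows that the fibre $f_w$ of $f$, after base change to the residue field of $R$, is equivalent to $g_i$; finiteness of the $\operatorname{Isom}$ scheme over $\kappa(w)$ then yields $f_{\overline w}\sim(g_i)_{\overline w}$ over $\overline{\kappa(w)}$, i.e.\ $w\in Z_i$.

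The routine ingredients are representability of the relevant $\operatorname{Isom}$ functor, Chevalley's theorem, and the point-set topology of Noetherian Jacobson schemes. The substantive step --- and the place where the hypothesis that the source $X$ is \emph{fixed} is genuinely exploited --- is the specialization argument: the valuation comparison on the function field of $X_R$ works precisely because $X_R$ is constant and $f$ is finite. (A possible alternative is to extend the generic isomorphism of targets directly over $\Spec R$ via separatedness of the moduli of curves; this is clean for $Y_i$ of genus $\ge 2$ but subtler in low genus, so the function-field argument above seems the more robust route.)
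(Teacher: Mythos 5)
Your argument is correct, but it takes a genuinely different route from the paper's. The paper first rigidifies the target: flatness plus the hypothesis of finitely many classes forces the induced map $S\rightarrow M_g$ to be constant, and the residual gerbe $\mathcal{M}_g\times_{M_g}\Spec(k)$ is used to trivialize $\mathcal{Y}\cong Y_{S'}$ after an \'etale cover $S'\rightarrow S$; the constant-target case is then handled by Osserman's representable Hom-scheme $H_d(X,Y)$ together with the free $\Aut(Y)$-action and Borel's Closed Orbit Lemma (freeness makes all orbits equidimensional, hence closed, so the connected image of $S$ sits in one orbit). You instead keep the target varying and show directly that each locus $Z_i$ is closed, via constructibility (Isom scheme plus Chevalley) and stability under specialization; the latter is your substantive contribution, a valuative comparison on $\kappa(X_R)$ that exploits precisely the constancy of the source. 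I checked the delicate points and they hold: the special fibre of $\mathcal{Y}_R$ is integral and reduced (geometric fibres of $\mathcal{Y}\rightarrow S$ are smooth proper curves), so its generic point gives a DVR with uniformizer $\pi$; finiteness of $f_R$ sends the generic point of the special fibre of $X_R$ to $\zeta$, and a DVR admits no proper overrings other than its fraction field, so $\mathcal{O}_\zeta$ is the restricted valuation; and the restriction of the Gauss valuation to a subfield of the form $\Frac(g_i^*\kappa(Y_i)\otimes_k F)$ is again the Gauss valuation because $g_i^*$ is $k$-linear and injective, so $k$-independent elements stay independent. What each approach buys: the paper's gives the clean structural statement that equivalence classes are closed $\Aut(Y)$-orbits, at the price of invoking the moduli stack of curves; yours avoids stacks and the orbit lemma and is more self-contained, at the price of a more hands-on specialization argument. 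Two small points to tighten: $\kappa(X)\otimes_k F$ and $g_i^*\kappa(Y_i)\otimes_k F$ should be their fraction fields (composita inside $\kappa(X_R)$); and representability of your Isom functor requires $\mathcal{Y}\rightarrow S$ to be projective and flat, which you should justify, e.g., by noting that $f$ is finite and flat (fibrewise flatness plus flatness over $S$), so the norm of a relatively ample bundle on $X_S$ is relatively ample on $\mathcal{Y}$.
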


First, we will prove \cref{isotrivthm} for $\mathcal{Y}=Y_S$ and then reduce the general case to this case. In this restricted case, we will make use of the following:

\begin{lem}(\cite[Lemma 1.8]{borel} - Closed Orbit Lemma)\label{orbitlem}
Let $k$ be an algebraically closed field, $G$ a smooth $k$-group of finite type over $k$, $X$ a scheme of finite type over $k$, and $\alpha:G\times X\rightarrow X$ a group action. For $x\in X(k)$, let $\alpha_x:G\rightarrow X$ be the orbit map (i.e.\ $\alpha(g)=g\cdot x$). Then the set theoretic image of $\alpha_x$ is locally closed, and with the reduced induced scheme structure it is smooth. Moreover, the orbits of minimal dimension are closed.
\end{lem}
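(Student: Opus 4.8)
The plan is to derive the statement from two classical inputs: Chevalley's theorem on the constructibility of images of finite-type morphisms, and the homogeneity of an orbit under the group action. Write $O \subseteq X$ for the set-theoretic image of $\alpha_x$. Using that $G$ is smooth (so that $G(k)$ is Zariski dense), every $g \in G(k)$ acts on $X$ as an automorphism $\alpha(g,-)$ that carries $O$ into itself, and these automorphisms act transitively on the $k$-points of $O$. This transitivity will be the engine behind all three assertions.

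First I would establish that $O$ is locally closed, which I expect to be the crux. By Chevalley's theorem the image $O$ of $\alpha_x$ is constructible, so $O$ contains a subset $U$ that is open and dense in the closure $\overline{O}$. Fix a $k$-point $y_0 \in U$. For an arbitrary $y \in O(k)$, transitivity supplies $g \in G(k)$ with $g \cdot y_0 = y$; since $\alpha(g,-)$ is an automorphism of $X$ preserving $\overline{O}$, the translate $g \cdot U$ is again open in $\overline{O}$, contains $y$, and lies inside $O$. Thus every $k$-point of $O$ has an open-in-$\overline{O}$ neighborhood contained in $O$, so $O$ is open in $\overline{O}$, i.e. locally closed. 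The essential idea here is to upgrade the single dense open subset furnished by constructibility to an open cover of the entire orbit by sweeping it around with the group; this step, not the later ones, is where the real content lies.

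For smoothness, I would equip $O$ with its reduced induced structure $O_{\mathrm{red}}$ and invoke generic smoothness: over the algebraically closed (hence perfect) field $k$, the smooth locus of the reduced finite-type scheme $O_{\mathrm{red}}$ is open and dense, in particular nonempty. Picking a smooth $k$-point $y_0$, I would note that each $\alpha(g,-)$ restricts to an automorphism of $O_{\mathrm{red}}$ and therefore preserves smoothness, while transitivity moves $y_0$ onto any prescribed point. Hence every point of $O_{\mathrm{red}}$ is smooth.

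Finally, for the minimal-dimension claim, I would analyze the boundary $\partial O := \overline{O} \setminus O$. Because the action is by automorphisms, $\overline{O}$ is $G$-stable, hence so is $\partial O$; and since $O$ is open and dense in $\overline{O}$, the set $\partial O$ is a closed $G$-stable subset of strictly smaller dimension than $O$. If $\partial O$ were nonempty, then being $G$-stable it would contain an orbit $O'$ with $\dim O' \le \dim \partial O < \dim O$, contradicting minimality; so a minimal-dimension orbit has $\partial O = \emptyset$ and is therefore closed. The only delicate point in this last step is the dimension bookkeeping when $G$ is disconnected, which is handled by observing that $O$ is a finite union of translates of the irreducible set $G^0 \cdot x$ (for $G^0$ the identity component), all of the same dimension, so that every irreducible component of $\overline{O}$ has dimension $\dim O$ and a proper closed subset necessarily drops in dimension.
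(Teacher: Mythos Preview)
The paper does not supply its own proof of this lemma: it is quoted verbatim from Borel's \emph{Linear Algebraic Groups} \cite[Lemma 1.8]{borel} and used as a black box in the proof of \cref{isotrivprop}. Your argument is correct and is essentially the standard proof given in Borel---Chevalley constructibility plus transitivity to get local closedness, generic smoothness plus transitivity for smoothness of the reduced orbit, and the $G$-stable boundary of strictly smaller dimension for the minimal-orbit statement---so there is nothing to compare beyond noting that you have reproduced the cited source's reasoning.
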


Equipped with \cref{orbitlem}, we can provide a proof of \cref{isotrivthm} in the $\mathcal{Y}=Y_S$ case:

\begin{prop}\label{isotrivprop}
Let $k$ be an algebraically closed field and let $X$ and $Y$ be smooth proper curves over $k$. Let $S$ be a connected $k$-variety, and $f:X_S\rightarrow Y_S$ a family of degree $d$ maps over $S$. If the maps over $k$-points of $S$ lie in finitely many equivalence classes, then $f$ is constant.
\end{prop}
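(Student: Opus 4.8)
\emph{Proof plan.} The plan is to realize the family $f$ as a morphism from $S$ into a finite-type parameter scheme of degree $d$ maps $X\to Y$, and to identify equivalence of maps with orbits of an algebraic group acting on that scheme. First I would invoke the scheme $H:=\Hom_d(X,Y)$ representing the functor $T\mapsto\{\text{degree }d\ T\text{-morphisms }X_T\to Y_T\}$; since $X$ is a fixed proper curve and $Y$ is projective, prescribing the degree cuts out a finite-type (quasi-projective) $k$-scheme, via graphs inside the Hilbert scheme of $X\times Y$ together with boundedness of graphs of fixed degree. The family $f:X_S\to Y_S$ is then classified by a morphism $\phi:S\to H$ with $\phi(s)\in H(k)$ equal to the fiber $f_s:X\to Y$ for each $s\in S(k)$.

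Second, I would let $G:=\Aut(Y)$, a $k$-group scheme of finite type ($\PGL_2$ if $g(Y)=0$, an extension of a finite group by an abelian variety if $g(Y)=1$, and a finite group if $g(Y)\ge 2$); after replacing $G$ by $G_{\mathrm{red}}$ I may assume it is smooth without affecting its group of $k$-points or the orbits of $k$-points. Post-composition gives an action $G\times H\to H$, and because here all targets equal $Y$ itself, two $k$-points of $H$ are equivalent in the paper's sense exactly when they lie in one $G(k)$-orbit. The crucial observation is that this action is \emph{free}: a degree $d\ge 1$ map $h:X\to Y$ is surjective, so any $\sigma\in\Aut(Y)$ with $\sigma\circ h=h$ fixes $Y$ pointwise and is therefore the identity. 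Hence every orbit is isomorphic to $G$; in particular all orbits have the same dimension, so every orbit has minimal dimension and is \emph{closed} by \cref{orbitlem}.

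The conclusion then follows formally. The hypothesis says $\phi(S)$ is contained in a union $O_1\cup\dots\cup O_n$ of finitely many orbits, which we may take pairwise distinct and hence pairwise disjoint. Each $O_i$ is closed in $H$, and since $\bigcup_{j\ne i}O_j$ is also closed, $O_i$ is open in $W:=O_1\cup\dots\cup O_n$ as well; thus $W=O_1\sqcup\dots\sqcup O_n$ is a disjoint union of clopen subsets. As $S$ is connected its image $\phi(S)$ is connected, so $\phi(S)\subseteq O_i$ for a single $i$. Therefore all fibers of $f$ over $k$-points of $S$ lie in one $G(k)$-orbit, i.e.\ in one equivalence class, and $f$ is constant.

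I expect the only genuine input beyond bookkeeping to be the freeness of the action: it is what collapses the orbit-dimension function to a constant and forces every orbit to be closed, which in turn lets connectedness alone finish the argument. Absent freeness, the image of $S$ could meet a top-dimensional orbit together with lower-dimensional orbits in its boundary, and a more delicate stratification (or inductive) argument would be needed. The remaining points — representability and finite-typeness of $\Hom_d(X,Y)$, and the reduction to a smooth $G$ via $G_{\mathrm{red}}$ — are standard and I would dispatch them by citation.
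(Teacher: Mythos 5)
Your proposal is correct and follows essentially the same route as the paper's proof: represent degree-$d$ maps by a quasi-projective scheme $H_d(X,Y)$, use the freeness of the $\Aut(Y)$-action to force all orbits to have the same (hence minimal) dimension, conclude closedness from the Closed Orbit Lemma, and finish with connectedness of $S$. The only differences are cosmetic — you supply the short argument for freeness and the clopen-decomposition step explicitly, where the paper cites Osserman's Theorems A.1 and A.6 and leaves the topological step implicit.
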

\begin{proof}

By \cite[Theorem A.1]{ossp}, the functor $\mathcal{H}_d(X,Y)$ sending a $k$-scheme $S$ to $\{S$-morphisms $X_S\rightarrow Y_S\mbox{ of degree }d\}$ is representable by a quasi-projective scheme $H_d(X,Y)$. Hence, $f:X_S\rightarrow Y_S$ corresponds to a morphism $\varphi_f:S\rightarrow H_d(X,Y)$.

By \cite[Theorem A.6]{ossp} there is a free action of the group scheme $\Aut(Y)$ on $H_d(X,Y)$ where $\varphi \in \Aut(Y)$ acts on the point corresponding to a map $f:X\rightarrow Y$ by sending it to the point corresponding to $\varphi\circ f:X\rightarrow Y$. The freeness of the $\Aut(Y)$-action implies that each orbit has the same dimension. By \cref{orbitlem}, each orbit is then closed. Since the maps over $k$-points of $S$ lie in finitely many equivalence classes, $\varphi_f(S)$ is contained in finitely many $\Aut(Y)$-orbits. Since $S$ is connected, $\varphi_f(S)$ is connected as well, and must then be contained in a single $\Aut(Y)$-orbit. In other words, all the maps over $k$-points of $S$ lie in a single equivalence class, so the family is constant.
\end{proof}

Using the moduli stack of genus $g$ curves, we can reduce the general case to the situation of \cref{isotrivprop}:

\begin{proof}[Proof of \cref{isotrivthm}]
Let $\pi:\mathcal{Y}\rightarrow S$ be the $S$-structure morphism. Since $X$ is a proper curve and $f$ is finite, $\pi^{-1}(s)$ is also a proper curve for every $k$-point $s\in S$. Since $\pi$ is flat, the arithmetic genus of the fibers is constant and by \cref{famdfn} the fibers are smooth. Therefore $\pi$ induces a map $\varphi_\pi:S\rightarrow M_g$ to the coarse moduli space of genus $g$ curves over $k$. Since $S$ is connected and $\varphi_\pi(S)$ contains finitely many $k$-points by hypothesis, $\varphi_\pi(S)$ contains a single $k$-point. In other words, every $\pi^{-1}(s)$ is isomorphic. Let $Y$ denote this curve.

We claim that after some \e tale pullback $\mathcal{Y}$ becomes trivial. That is, there exists an \e tale covering $\psi:S'\rightarrow S$ such that the following commutes
$$\xymatrix{
\mathcal{Y}\times_S S'\ar[rr]^\cong \ar[dr] & & Y_{S'}\ar[dl] \\
& S' &
}$$
To prove this claim, let $\mathcal{M}_g$ denote the moduli stack of genus $g$ curves over $k$.
Since $\mathcal{Y}$ has constant fiber corresponding to a $k$-point of $M_g$ and $S$ is a variety, the maps it induces from $S$ to $\mathcal{M}_g$ and $M_g$ factor as in the commutative diagram

$$\xymatrix{
S\ar[r]\ar[d] & \mathcal{M}_g\ar[d] \\
\Spec(k)\ar[r] & M_g
}$$
which induces a map $S\rightarrow \mathcal{M}_g\times_{M_g} \Spec(k)$. The trivial family $Y\times_{\Spec(k)}S$ also induces a morphism from $S$ to $\mathcal{M}_g\times_{M_g} \Spec(k)$. Since $\mathcal{M}_g\times_{M_g} \Spec(k)\rightarrow\Spec(k)$ is a gerbe, every two objects over $S$ are locally isomorphic in the \e tale topology. That is, there is an \e tale covering $\psi: S'\rightarrow S$ over which there is an isomorphism $\xi:\mathcal{Y}\times_S S'\rightarrow Y_S'$. Hence, after an \e tale base change we have a family $\xi\circ(f\times \psi):X_{S'}\rightarrow Y_{S'}$, which concludes the proof of the claim.

Denote $\xi\circ (f\times \psi)$ by $g$. Since $\psi:S'\rightarrow S$ is \e tale, if $\psi(s')=s$ then the fiber of $g$ above $s'$ is equivalent to the fiber of $f$ above $s$. Thus, since there are finitely many equivalence classes of fibers of $f$, the same is true for $g$. Therefore by \cref{isotrivprop}, each connected component of $S'$ has only one equivalence class of fiber lying over its $k$-points. Since \e tale morphisms are open by \cite[Proposition 2.4.6]{ega4} and $S$ is connected, it is straightforward to verify that this implies the same holds for $S$.
\end{proof}

\begin{rmk}
Whenever we say having a finite number of equivalence classes of fibers over a connected base implies that there are no non-constant families, it will be by \cref{isotrivthm}. Furthermore, when stating that there are no non-constant families it will be implicit that we are restricting to families over a connected base.
\end{rmk}

%%%%%%%%%%%%%%%%%%%
%%%%%%%%%%%%%%%%%%%

\section{Summary of results in the non-Galois case}\label{nongalsect}
Various results from the literature presented in this section state that there are finitely many equivalence classes of maps satisfying certain properties. Using \cref{isotrivthm} we can translate these results into statements about the non-existence of a non-constant family.

\subsection{Source of genus 0}
When $k$ is the complex numbers, \cref{quest} can be answered using a theorem of Eisenbud and Harris when $X$ is the projective line:

\begin{thm}(\cite[Theorem 2.3]{eh})\label{eh}
Let $S$ be a finite set of closed points of $\PP^1_\CC$ and $d\in\NN$. The number of equivalence classes of degree $d$ maps with ramification locus $S$ is finite.
\end{thm}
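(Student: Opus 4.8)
\emph{Proof proposal.} First I would reduce the target to $\PP^1_\CC$: a nonconstant (hence, in characteristic $0$, separable) finite morphism $X\to Y$ of smooth proper curves satisfies $g(Y)\le g(X)$, so since $X=\PP^1_\CC$ every target $Y$ is isomorphic to $\PP^1_\CC$. The problem thus becomes counting degree $d$ morphisms $\PP^1_\CC\to\PP^1_\CC$ with ramification locus $S$, up to the action of $\Aut(\PP^1_\CC)=\PGL_2$ on the target. As recalled in the introduction, such a morphism is given by a pair of sections of $\mathcal{O}(d)$ with no common zero, and post-composing with an element of $\PGL_2$ merely changes the basis of the span of this pair; hence equivalence classes of degree $d$ maps $\PP^1_\CC\to\PP^1_\CC$ are in bijection with base-point-free two-dimensional subspaces $V\subseteq H^0(\PP^1,\mathcal{O}(d))$, i.e.\ with the points of an open subscheme of the Grassmannian $G:=G(2,d+1)$, which has dimension $2(d-1)=2d-2$.

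Next I would rephrase the ramification condition. The ramification divisor of the map attached to $V$ is the divisor of the Wronskian of $V$, a section of $\mathcal{O}(d)^{\otimes 2}\otimes\omega_{\PP^1}\cong\mathcal{O}(2d-2)$; in particular it has degree $2d-2$ (in agreement with Riemann--Hurwitz). So ``ramification locus $S$'' means $\Div W(V)=\sum_{P\in S}a_P\,P$ for one of the finitely many tuples $(a_P)_{P\in S}$ of integers $a_P\ge 1$ with $\sum_P a_P=2d-2$; and since the Wronskian divisor always has total degree $2d-2$, having ramification divisor $\sum_P a_P P$ is equivalent to having $\ord_P W(V)\ge a_P$ for all $P\in S$, i.e.\ to $V$ containing a section vanishing to order $\ge a_P+1$ at $P$ for every $P\in S$. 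It therefore suffices to prove: for each fixed tuple $(a_P)$, only finitely many $V\in G$ have this property.

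For a single $P$, the condition ``$V$ contains a section vanishing to order $\ge a_P+1$ at $P$'' is the special Schubert condition $V\cap L_P\neq 0$, where $L_P:=H^0(\mathcal{O}(d)(-(a_P+1)P))$ has codimension $a_P+1$ in $H^0(\mathcal{O}(d))$; this cuts out a Schubert subvariety of $G$ of codimension $a_P$. The locus in question is $\bigcap_{P\in S}\{V:V\cap L_P\ne 0\}$, of expected codimension $\sum_P a_P=2d-2=\dim G$, hence expected dimension $0$. I would then argue that this intersection is genuinely $0$-dimensional, hence finite: the subspaces $L_P$ for distinct $P$ are in general position, in the sense that vanishing to prescribed orders at distinct points of $\PP^1$ imposes independent conditions on $H^0(\mathcal{O}(d))$, which is exactly what is needed for an intersection of special Schubert varieties to have the expected dimension. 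Equivalently, and perhaps more cleanly, one may invoke the classical fact that the Wronski morphism $G(2,d+1)\to\PP(H^0(\mathcal{O}(2d-2)))=\PP^{2d-2}$ is finite: the pencils with ramification locus $S$ form a finite union of fibers of this finite morphism (one per tuple $(a_P)$), hence a finite set, and the base-point-free ones are a subset.

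The main obstacle, I expect, is precisely the properness of that Schubert intersection --- equivalently the finiteness (not merely generic finiteness) of the Wronski map --- for a configuration $S$ that is in no way assumed generic: generic finiteness is a one-line dimension count, but a priori the fibers over special Wronskian divisors could jump in dimension. I would clear this either by a tangent-space computation at a pencil with the prescribed Wronskian divisor (the Schubert tangent cones meet only in $0$ because the vanishing conditions at distinct points are independent), or by degenerating the points of $S$ together to a configuration where transversality is manifest and applying upper semicontinuity of fiber dimension, or simply by citing the finiteness of the Wronski map for $G(2,n)$ from the Schubert-calculus literature.
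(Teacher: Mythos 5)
Your proposal is correct and follows the same route the paper attributes to Eisenbud--Harris: identify equivalence classes with points of $Gr(2,H^0(\mathcal{O}(d)))$, translate each ramification condition into a special Schubert condition relative to the osculating flag at a point of $S$, and observe that the total codimension $\sum_P a_P = 2d-2$ equals $\dim Gr(2,d+1)$. The one place where you and the cited proof diverge is how the properness of the intersection is established: Eisenbud--Harris prove dimensional transversality of Schubert varieties defined by osculating flags at distinct points, and your remark that the tangent cones meet only in $0$ ``because vanishing conditions at distinct points are independent'' is too quick as stated --- independence of the linear conditions on $H^0(\mathcal{O}(d))$ does not by itself give transversality of the Schubert intersection, which is the actual content of their theorem. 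However, your alternative via finiteness of the Wronski map does close the gap cleanly and self-containedly: since $W(f,g)=fg'-f'g$ is bilinear and alternating in $(f,g)$, the Wronski map is the restriction to the Pl\"ucker-embedded $Gr(2,d+1)$ of a linear projection whose center misses the Grassmannian (no pencil has identically vanishing Wronskian in characteristic $0$), and such a projection is a finite morphism; the pencils with ramification locus $S$ then lie in the finitely many fibers over the divisors $\sum_P a_P P$. This gives finiteness of \emph{every} fiber, not just the generic one, which is exactly what the non-genericity of $S$ demands.
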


This is proven within the framework of Schubert calculus. More specifically, by the discussion in \cref{introsect}, equivalence classes of degree $d$ maps correspond to  points on the Grassmannian $Gr(2,H^0(\mathcal{O}(d))$.  In \cite{eh}, each ramification condition is translated as the plane lying on a certain Schubert variety. The intersection of these Schubert varieties is shown to be transverse and having expected dimension zero, thus establishing the finiteness result.

The following result applies the methods of \cref{eh} in positive characteristic. The restriction that the degree of the map is smaller than the characteristic of the field is necessary because the existence of inseparable maps in $Gr(2,H^0(\mathcal{O}(d))$ may cause the intersection of the Schubert varieties corresponding to imposing ramification conditions not to be transverse.

\begin{thm}(\cite[Corollary 3.2]{ossp}) \label{osschar}
Let $k$ be an algebraically closed field of characteristic $p>0$, $d$ a positive integer less than $p$, and $S$ a finite set of closed points of $\PP^1_k$. The number of equivalence classes of degree $d$ maps with ramification locus $S$ is finite.
\end{thm}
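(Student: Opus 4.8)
The plan is to run the Schubert-calculus argument behind \cref{eh} in characteristic $p$, the only genuinely new ingredient being a transversality statement. As explained in \cref{introsect}, a basepoint-free two-dimensional subspace $V\subseteq H^0(\PP^1_k,\mathcal{O}(d))$ determines a degree $d$ map $\varphi_V\colon\PP^1_k\to\PP^1_k$ that is well defined up to post-composition with an automorphism of the target, and every equivalence class of degree $d$ map with source $\PP^1_k$ arises from a unique point of the open locus $U$ of basepoint-free planes in the Grassmannian $G:=Gr(2,H^0(\mathcal{O}(d)))$, which has dimension $2d-2$. So it suffices to bound the number of points of $U$ whose associated map has ramification locus contained in $S$.

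Because $d<p$, every degree $d$ map $\PP^1_k\to\PP^1_k$ is separable (an inseparable map has degree divisible by $p$) and tamely ramified (each ramification index is at most $d<p$), so by Riemann--Hurwitz its ramification divisor has degree $2d-2$. Hence the only functions $e\colon S\to\ZZ_{\ge 1}$ that can occur as $P\mapsto(\text{ramification index of }\varphi_V\text{ at }P)$ are those with $\sum_{P\in S}(e(P)-1)=2d-2$, of which there are finitely many; I fix one such $e$. For $P\in S$ the condition that $\varphi_V$ ramify to order at least $e(P)$ at $P$ is precisely that $\dim\bigl(V\cap H^0(\mathcal{O}(d)(-e(P)P))\bigr)\ge 1$, i.e.\ that $V$ lie on the Schubert variety $\Sigma_P\subseteq G$ cut out by the osculating subspace $H^0(\mathcal{O}(d)(-e(P)P))$ of the rational normal curve at $P$; this $\Sigma_P$ has codimension $e(P)-1$ when $e(P)\le d$ and is empty (making that profile vacuous) when $e(P)>d$. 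Therefore $\sum_{P\in S}\codim\Sigma_P=\sum_{P\in S}(e(P)-1)=2d-2=\dim G$, so $\bigcap_{P\in S}\Sigma_P$ has expected dimension zero, and every degree $d$ map with ramification profile $e$ gives a point of $\bigcap_{P\in S}\Sigma_P\cap U$.

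The crux is to show that $\bigcap_{P\in S}\Sigma_P$ genuinely has dimension zero. The subspaces $H^0(\mathcal{O}(d)(-e(P)P))$ are \emph{not} in general position---they are osculating flags of the rational normal curve at the distinct points of $S$---so this is a substantive theorem: over a field of characteristic zero it is exactly the transversality that Eisenbud and Harris establish en route to \cref{eh}, whereas in characteristic $p$ the excess intersection that could a priori arise is supported on planes defining inseparable maps, and the hypothesis $p>d$ excludes precisely these. Here I would invoke Osserman's positive-characteristic refinement of the Eisenbud--Harris transversality---proved by degenerating $\PP^1$ to a chain of rational curves and analysing the resulting (linked) limit linear series, whose pathologies are confined to the case $p\mid d$---to conclude that $\bigcap_{P\in S}\Sigma_P$ has the expected dimension, namely zero. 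This is the one step I expect to require real work; the rest is bookkeeping.

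Finally, $\bigcap_{P\in S}\Sigma_P$ is a closed zero-dimensional subscheme of the projective variety $G$, hence finite, and so is its intersection with $U$; this bounds the number of equivalence classes of degree $d$ maps with ramification profile $e$. Summing over the finitely many admissible profiles $e$ supported on $S$ yields the stated finiteness.
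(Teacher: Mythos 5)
Your sketch is correct in outline and matches the approach the paper attributes to this result: the theorem appears here only as a citation to \cite[Corollary 3.2]{ossp}, and the paper's sole commentary is exactly the Schubert-calculus framework you describe (planes in $Gr(2,H^0(\mathcal{O}(d)))$, ramification conditions as Schubert varieties cut out by osculating flags, transversality as the one substantive issue, with $d<p$ excluding the inseparable loci that cause it to fail). Your surrounding bookkeeping --- Riemann--Hurwitz forcing $\sum_{P}(e(P)-1)=2d-2=\dim Gr(2,H^0(\mathcal{O}(d)))$, the codimension count $e(P)-1$ for each Schubert condition, and the finiteness of admissible ramification profiles --- is accurate, and you correctly isolate the dimension statement for the Schubert intersection as the crux and defer it to the same source the paper cites.
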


In light of \cref{osschar}, any non-constant family of maps with source $\PP^1_k$ of fixed degree and ramification locus must have degree at least as large as the characteristic of the field. Indeed, this is the case in \cref{tameex}.

\subsection{Target of higher genus}
In \cite{fra}, de Franchis proved a strong statement concerning the number of maps from a fixed curve $X$ to all curves of genus greater than one. In modern language it can be formulated as follows:

\begin{thm}(\cite[Theorem of de Franchis]{kani})\label{defrthm}
Let $X$ be a smooth proper curve over a field and $M=\{f:X\rightarrow Y|$ $f$ is non-constant and the genus of $Y$ is greater than one$\}$. Consider $f_i:X\rightarrow Y_i$, \textit{equivalent} if there is an isomorphism $g:Y_1\rightarrow Y_2$ such that $g\circ f_1=f_2$. Then the number of equivalence classes in $M$ is finite.
\end{thm}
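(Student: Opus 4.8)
\emph{Overview, reductions, and numerics.} The plan is to combine three ingredients: a numerical reduction to finitely many ``types'' of map, rigidity for maps with a \emph{fixed} target, and --- the real content --- finiteness of the set of targets that can occur. By Galois descent we may assume $k$ is algebraically closed: a curve $Y$ with $g_Y\ge 2$ has finitely many $k$-forms (its $\bar k$-automorphism group is finite) and a $\bar k$-map descends on each form. Now if $f\colon X\to Y$ is non-constant with $g_Y\ge 2$, then $f^{*}$ is injective on global regular differentials (as $f$ is generically \'etale), so $g_X\ge g_Y$; Riemann--Hurwitz gives $2g_X-2=\deg(f)(2g_Y-2)+\deg R$ with $R\ge 0$, whence $2\le g_Y\le g_X$ and $\deg f\le (g_X-1)/(g_Y-1)$. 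Thus only finitely many pairs $(d,h):=(\deg f,g_Y)$ occur, and it suffices to bound, for each, the number of equivalence classes with $\deg f=d$ and $g_Y=h$.

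\emph{Rigidity for a fixed target.} Fix $Y$ with $g_Y=h$. The scheme $H_d(X,Y)$ recalled in the excerpt is quasi-projective over $k$, and its tangent space at a point $f$ is $H^{0}(X,f^{*}T_Y)$. Since $f^{*}T_Y=f^{*}\omega_Y^{-1}$ has degree $d(2-2h)<0$, this vanishes; a finite-type $k$-scheme all of whose tangent spaces vanish is a finite set of reduced points, so only finitely many $f$ map to a given $Y$. This step is robust in all characteristics.

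\emph{Finiteness of targets.} The pairs $(Y,f\colon X\to Y)$ with $g_Y=h$, $\deg f=d$ and $X$ fixed are bounded (embed $Y$ tricanonically and bound the graph of $f$ by its Hilbert polynomial, which depends only on $d,h$), hence form a finite-type moduli problem; arguing as in the proof of \cref{isotrivthm} --- passing to a level cover of the moduli stack of genus-$h$ curves and invoking \cref{orbitlem} to handle the rigidification --- it is enough to show that every such pair $(Y,f)$ has no nontrivial first-order deformation with $X$ fixed. A first-order deformation is a class $\theta\in H^{1}(Y,T_Y)$ together with an extension of $f$ over the deformed target; the obstruction to extending $f$ is $f^{*}\theta\in H^{1}(X,f^{*}T_Y)$, and when it vanishes the extensions form a torsor under $H^{0}(X,f^{*}T_Y)=0$, so the deformation space is $\ker\bigl(f^{*}\colon H^{1}(Y,T_Y)\to H^{1}(X,f^{*}T_Y)\bigr)$. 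By Serre duality this is dual to the cokernel of the trace $H^{0}(X,\omega_X\otimes f^{*}\omega_Y)\to H^{0}(Y,\omega_Y^{\otimes 2})$; to see the trace is surjective, consider $0\to\mathcal O_Y\to f_{*}\mathcal O_X\to E\to 0$. In characteristic zero this splits (via $\tfrac1d\operatorname{tr}$), so $H^{0}(Y,E)=0$; hence no line subbundle of $E$ has degree $\ge g_Y$, and since $g_Y\ge 2$ forces $\deg\omega_Y=2g_Y-2\ge g_Y$, it follows that $E\otimes\omega_Y^{-1}$ has no line subbundle of degree $\ge 0$ and so $H^{0}(Y,E\otimes\omega_Y^{-1})=0$. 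As $E\otimes\omega_Y^{-1}\cong K^{\vee}$ with $K=\ker(f_{*}\omega_X\to\omega_Y)$ (the dualizing trace $f_{*}\omega_X\to\omega_Y$ being surjective since $\operatorname{tr}(\mathfrak d^{-1})=\mathcal O_Y$), Serre duality gives $H^{1}(Y,\omega_Y\otimes K)=0$, which is exactly the surjectivity wanted. Hence the deformation space vanishes, $\mathcal H$ is finite, and we are done in characteristic zero.

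\emph{The main obstacle.} The hard point is the last step in characteristic $p$ when $p\mid\deg f$ (in particular under wild ramification): there $\operatorname{tr}_f\circ f^{*}=\deg f=0$ on $H^{1}(\mathcal O)$, so injectivity of $f^{*}$ on $H^{1}(\mathcal O_Y)$ --- equivalently $H^{0}(Y,E)=0$ --- is no longer automatic, and the pair $(Y,f)$ need not be visibly unobstructed. One must then control the higher obstructions, or spread out and reduce to characteristic zero, or replace deformation theory by a Jacobian argument: $f$ realizes $J_Y$, up to isogeny, as an abelian subvariety of the fixed $J_X$ carrying a restricted polarization of bounded type, and one extracts finiteness from the structure of such subvarieties. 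This genuinely global input is where a complete proof (and the cited reference) concentrates its effort.
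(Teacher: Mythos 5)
The paper offers no proof of this statement---it is quoted from Kani's article (ultimately de Franchis)---so there is nothing internal to compare against; I am assessing your argument on its own terms. Your overall strategy (numerical reduction to finitely many pairs $(\deg f,\,g_Y)$; rigidity of maps to a fixed target via $H^0(X,f^*T_Y)=0$; rigidity of the pair $(Y,f)$ via vanishing of $\ker\bigl(f^*\colon H^1(Y,T_Y)\to H^1(X,f^*T_Y)\bigr)$) is coherent, and the characteristic-zero computation is correct: the Serre-duality translation to surjectivity of the trace, the identification of the kernel of the dualizing trace with $E^\vee\otimes\omega_Y$, and the vanishing $H^0(Y,E\otimes\omega_Y^{-1})=0$ once $H^0(Y,E)=0$ all check out. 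This is a genuinely different route from the classical proofs, which work with the induced maps of Jacobians (finiteness of abelian subvarieties of $J_X$ of bounded degree with respect to the fixed polarization), or, as in Kani's paper, with subfields of $\kappa(X)$ and Castelnuovo--Severi-type inequalities; your deformation-theoretic argument is cleaner where it applies, but it is exactly the global Jacobian input that survives in all characteristics.

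The gap you flag yourself is genuine and not a removable technicality. When $\cha(k)=p>0$ and $p\mid\deg f$, the sequence $0\to\mathcal O_Y\to f_*\mathcal O_X\to E\to 0$ need not split, $H^0(Y,E)=\ker\bigl(f^*\colon H^1(Y,\mathcal O_Y)\to H^1(X,\mathcal O_X)\bigr)$ can be nonzero, and your unobstructedness argument collapses. None of your proposed repairs is carried out, and the first two do not obviously succeed: rigidity does not specialize from the generic to the special fibre of a finite-type family (and covers of $X$ need not lift to characteristic zero at all when the ramification is wild), while ``controlling higher obstructions'' is beside the point once the first-order deformation space itself may be nonzero---you would instead have to show those first-order deformations are not integrable, which is a different and harder argument. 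Since the theorem is stated over an arbitrary field and the paper applies it (\cref{maingalrmk}) precisely in positive characteristic to eliminate targets of genus at least two, the case $p\mid\deg f$ cannot be omitted. Two smaller points: finiteness for literally all non-constant $f$ is false in characteristic $p$ because of iterated Frobenius $X\to X^{(p^n)}$, so separability---the paper's standing ``generically \'etale'' convention---must be assumed, as your differential argument implicitly does; and your construction of the finite-type moduli of pairs is only sketched, though standard. As written, your proof establishes the theorem only when $\cha(k)=0$ or $\cha(k)\nmid\deg f$.
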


Since the target curve has genus at least two, $\Aut(Y)$ is finite. Therefore one can rephrase \cref{defrthm} as stating that there are finitely many non-constant maps $f:X\rightarrow Y$, where the target curve $Y$ varies over a set of representatives for the isomorphism classes of curves of genus at least two.

Using \cref{subfields} below, we can translate \cite[Theorem 4]{kani} which concerns subfields of $\kappa(X)$ into a statement concerning equivalence classes of branched covers. This reformulation, along with the fact that there are finitely many equivalence classes of isogenies from a fixed elliptic curve $E$ of bounded degree, yields the following:

\begin{thm}\label{tog1}
Let $X$ be a smooth proper curve over an algebraically closed field $k$ and $d\in\NN$. The number of equivalence classes of degree $d$ maps from $X$ to genus one curves is finite.
\end{thm}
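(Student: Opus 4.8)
The plan is to pass to subfields and then invoke a de Franchis--type finiteness, with the genus-one structure theory used to clean up the remaining cases. By \cref{subfields}, an equivalence class of a degree $d$ map $f\colon X\to Y$ with $Y$ of genus one is the same datum as a subfield $F=f^{*}\kappa(Y)$ of $\kappa(X)$ of index $d$ whose associated smooth proper curve has genus one, and conversely; the one mild point to note is that although a genus-one curve has a positive-dimensional automorphism group (translations), every automorphism carries the function field to itself, so $F$ really is an invariant of the class. Thus it suffices to show that $\kappa(X)$ has only finitely many subfields of index at most $d$ whose associated curve has genus one. (Note that the index bound is essential here: without it there are in general infinitely many such subfields, e.g.\ when $X$ is an elliptic curve with complex multiplication.)

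Call a proper positive-genus subfield $F\subsetneq\kappa(X)$ \emph{primitive} if no positive-genus subfield lies strictly between $F$ and $\kappa(X)$; geometrically, $X\to Y_{F}$ admits no nontrivial factorization through a curve of positive genus. Reformulating \cite[Theorem 4]{kani} through \cref{subfields}, there are only finitely many primitive subfields of index at most $d$; this is the analogue of \cref{defrthm}, proved by the Castelnuovo--Severi inequality, but now allowing genus-one (not merely genus $\ge 2$) subcovers. Given an arbitrary genus-one subfield $F$ of index $d$, choose a positive-genus subfield $Z$ with $F\subseteq Z\subsetneq\kappa(X)$ that is maximal for inclusion (possibly $Z=F$); then $Z$ is primitive, since any strictly larger positive-genus subfield between $F$ and $\kappa(X)$ would violate maximality, hence $Z$ is one of finitely many, and $F$ is a genus-one subfield of $\kappa(Y_{Z})$ of index $[Z:F]\le d$. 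If $g(Y_{Z})\ge 2$, then Riemann--Hurwitz (valid as all extensions in sight are separable) forces $g(Y_{Z})<g(X)$, so one may induct on $g(X)$; the base cases $g(X)\le 1$ are immediate, since $\PP^{1}$ admits no map to a positive-genus curve, and for $X$ of genus one a genus-one subfield of index $m$ is the function field of a quotient by a subgroup of order $m$, which lies in $X[m]$, so there are finitely many. If $g(Y_{Z})=1$, then $Y_{Z}\to Y_{F}$ is, after choosing base points, an isogeny of elliptic curves followed by a translation, of degree $\le d$, and by the finiteness of equivalence classes of bounded-degree isogenies out of the fixed elliptic curve $\Jac(Y_{Z})$ there are finitely many possibilities for $Y_{F}$, hence finitely many such $F$.

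The mechanism underlying the primitive case is Albanese functoriality: a map $f\colon X\to Y$ to a genus-one curve induces a surjective homomorphism $\Jac(X)\to\Jac(Y)$, and since the Abel--Jacobi map identifies $Y\cong\Jac(Y)$ one recovers $f$ from this homomorphism up to equivalence. By Poincar\'e reducibility $\Jac(Y)$ is then isogenous to one of the finitely many one-dimensional simple factors of $\Jac(X)$, and among homomorphisms $\Jac(X)\to E$ of bounded degree there are only finitely many (up to isomorphism of the target), because the degree is a positive-definite quadratic form on the finitely generated group $\Hom(\Jac(X),E)$.

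I expect the main obstacle to be exactly this de Franchis--type finiteness in the genus-one range, the content of the primitive case. For targets of genus $\ge 2$ it is classical, but genus-one subcovers are genuinely more delicate: the target has a positive-dimensional automorphism group and $\Jac(X)$ can contain infinitely many elliptic curves (for instance when a factor is isogenous to $E\times E$), so no naive moduli count suffices. One must feed in the bounded-degree hypothesis --- through the positive-definiteness of the Rosati form, or, equivalently, through the bounded-degree isogeny count cited above --- to cut the a priori infinite list down to a finite one. A secondary, routine point is that characteristic $p>0$ causes no difficulty here, since our maps are generically \'etale and hence separable, so no Frobenius twists intrude, and both \cref{subfields} and the abelian-variety input are characteristic-free.
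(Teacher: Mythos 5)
Your proposal is correct and takes essentially the same route as the paper, which for \cref{tog1} gives only a two-sentence sketch: translate \cite[Theorem 4]{kani} into a statement about equivalence classes via \cref{subfields}, then finish with the finiteness of equivalence classes of bounded-degree isogenies out of a fixed elliptic curve. Your reduction to primitive (maximal positive-genus) subfields, the induction on genus, and the Jacobian/Rosati-form discussion are a faithful fleshing-out of exactly that argument.
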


%%%%%%%%%%%%%%%%%%%
%%%%%%%%%%%%%%%%%%%

\section{The case of Galois covers}\label{galsect}
For the remainder of this paper, we shall restrict our attention to Galois covers. The culminating result of this section is \cref{maingalthm}, which answers the following:

\begin{question}\label{galquest}
Let $X$ be a smooth proper curve over an algebraically closed field $k$, $S$ a finite set of closed points of $X$, and $G$ a finite group. Under what conditions does there exist a non-constant family of $G$-Galois maps with source $X$, of fixed degree and ramification locus $S$?
\end{question}

The first step of the proof of \cref{maingalthm} is establishing a correspondence between equivalence classes of $G$-Galois branched covers with source $X$ and $G$-actions on $X$. This is accomplished in \cref{actoncurves}. In that section it is also shown that the stabilized points of the $G$-action (see \cref{stabdfn} below) are exactly the ramification points of the induced $G$-Galois branched cover.

With this, fixing the ramification locus $S$ as above, we are reduced to analyzing $G$-actions on $X$ with stabilized locus $S$. After this reduction, \cref{galquest} can be answered almost immediately for the case when $X$ has genus greater than one. Two essentially disjoint proofs are given for the genus zero and genus one cases.

\subsection{Correspondence between group actions and classes of covers}\label{actoncurves}
We first establish a correspondence between equivalence classes of branched covers with source $X$ and subfields of $\kappa(X)$, which applies even without imposing the Galois condition:

\begin{lem}\label{subfields}
Let $X$ be a smooth proper curve over an algebraically closed field $k$. Finite index subfields of $\kappa(X)$
are in correspondence with equivalence classes of branched covers with source $X$. In this correspondence, the degree of the subfield is the degree of the branched cover.
\end{lem}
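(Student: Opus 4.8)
First I would invoke the classical anti-equivalence between the category of smooth proper curves over $k$ with dominant morphisms and the category of finitely generated field extensions of $k$ of transcendence degree one with $k$-algebra injections, under which a curve is sent to its function field and a dominant morphism $f\colon X\to Y$ to the pullback $f^{*}\colon\kappa(Y)\hookrightarrow\kappa(X)$. Using this, I would define a map $\Phi$ from equivalence classes of branched covers with source $X$ to finite index subfields of $\kappa(X)$ by sending $f\colon X\to Y$ to $f^{*}\kappa(Y)$. Since a branched cover is finite and dominant, $\kappa(X)/f^{*}\kappa(Y)$ is a finite extension of degree $\deg f$, so $f^{*}\kappa(Y)$ really is a finite index subfield and the equality of degrees asserted in the lemma is immediate. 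That $\Phi$ depends only on the equivalence class is a one-line check: if $g\colon Y_{1}\xrightarrow{\ \sim\ }Y_{2}$ satisfies $g\circ f_{1}=f_{2}$, then $f_{2}^{*}=f_{1}^{*}\circ g^{*}$ with $g^{*}$ surjective onto $\kappa(Y_{1})$, so $f_{2}^{*}\kappa(Y_{2})=f_{1}^{*}\kappa(Y_{1})$.

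Next I would check that $\Phi$ is a bijection. For injectivity, if $\Phi(f_{1})=\Phi(f_{2})=:K$, then $(f_{1}^{*})^{-1}\circ f_{2}^{*}$ is a $k$-algebra isomorphism $\kappa(Y_{2})\to\kappa(Y_{1})$, hence equals $g^{*}$ for a unique isomorphism $g\colon Y_{1}\to Y_{2}$ by the anti-equivalence; computing on function fields gives $(g\circ f_{1})^{*}=f_{2}^{*}$, and faithfulness of the functor forces $g\circ f_{1}=f_{2}$, so $f_{1}\sim f_{2}$. For surjectivity, given a finite index subfield $K$ (an intermediate field $k\subseteq K\subseteq\kappa(X)$), I would pick $t\in K$ transcendental over $k$ and note that $\kappa(X)$ is a finitely generated algebraic, hence finite, extension of $k(t)$; therefore $K/k(t)$ is finite and $K$ is a finitely generated extension of $k$ of transcendence degree one. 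The anti-equivalence then produces a smooth proper curve $Y$ with $\kappa(Y)\cong K$ over $k$ and a dominant morphism $f\colon X\to Y$ realizing $K\hookrightarrow\kappa(X)$; a dominant morphism of smooth proper curves is automatically finite, of degree $[\kappa(X):K]$, and $\Phi(f)=K$.

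The step I expect to need the most care is reconciling the surjectivity argument with the ``generically \e tale'' requirement built into the definition of a branched cover: the finite morphism $f$ produced above is generically \e tale precisely when $\kappa(X)/K$ is separable. Thus, read literally, $\Phi$ restricts to a bijection between equivalence classes of branched covers with source $X$ and \emph{separable} finite index subfields of $\kappa(X)$; if one drops the generic-\e taleness hypothesis and allows arbitrary finite morphisms, then every finite index subfield occurs, the purely inseparable part of the extension accounting for the difference. I would accordingly either insert the separability hypothesis into the statement or remark that the distinction is harmless for the uses of this lemma in \cref{nongalsect} and \cref{galsect}, where the relevant extensions (those underlying Galois covers, and those appearing in the cited de Franchis--Kani results) are separable in any case.
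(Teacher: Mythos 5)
Your proof follows essentially the same route as the paper's: the anti-equivalence between smooth proper curves over $k$ and function fields of transcendence degree one over $k$, well-definedness on equivalence classes, and the two constructions being mutually inverse; you simply spell out the inverse-bijection checks in more detail. Your closing caveat about separability is a valid observation the paper's proof elides --- with ``branched cover'' defined as finite and generically \e tale, the correspondence is really with \emph{separable} finite index subfields in positive characteristic --- and, as you note, this is harmless for the lemma's later uses.
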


\begin{proof}
Suppose $Y_1$ and $Y_2$ are also smooth proper curves and the branched covers $f_i:X\rightarrow Y_i$ are equivalent. That is, there is a commutative diagram:

$$\xymatrix{
& X\ar[ld]_{f_1}\ar[rd]^{f_2} & \\
Y_1\ar[rr]^\cong & & Y_2
}$$
By the equivalence of categories between smooth proper curves over $k$ and fields of transcendence degree one over $k$, this yields a commutative diagram of fields:

$$\xymatrix{
& \kappa(X) & \\
\kappa(Y_1)\ar[ur]^{i_1} & & \kappa(Y_2)\ar[ll]_\cong\ar[ul]_{i_2}
}$$
Therefore $i_1(\kappa(Y_1))=i_2(\kappa(Y_2))$ as subfields of $\kappa(X)$. The index of the subfield is equal to the degree of $f_i$. This shows that the mapping to subfields is well-defined on equivalence classes of covers.

Given a finite index subfield $L\subseteq \kappa(X)$, $L$ is also a field of transcendence degree one over $k$. Let $Y$ be a smooth proper curve over $k$ with function field $L$, which is unique up to isomorphism, and $X\rightarrow Y$ the branched cover corresponding to $\kappa(X)/L$. The uniqueness up to isomorphism of $Y$ is precisely the notion of equivalence we are considering. Therefore, this subfield gives a well-defined equivalence class of branched cover. These two operations are clearly inverse to each other, establishing the desired correspondence.
\end{proof}

To begin speaking of $G$-Galois covers, we introduce the definition of a group action on $X$:

\begin{dfn}
Let $X$ be a smooth proper curve over an algebraically closed field $k$ and $G$ a finite group. A \textit{group action} on $X$ is a choice of a finite subgroup of $\Aut(X)$. A \textit{$G$-action} on $X$ is a choice of a subgroup $H\subseteq \Aut(X)$ which is isomorphic to $G$. The choice of such an $H$ corresponds to an equivalence class of injective group homomorphisms $i:G\rightarrow \Aut(X)$, where two injections are considered equivalent if their images are the same.
\end{dfn}

\begin{rmk}
If one wished to consider injections of $G$ into $\Aut(X)$ instead of equivalences classes of such injections, to preserve the correspondence with $G$-Galois branched covers in \cref{subgroupstocovers} below, one would need to include in the definition of a $G$-Galois cover an isomorphism of $\Aut(X/Y)$ with $G$. Such a definition is used for instance in \cite{ch}.
\end{rmk}

To answer \cref{galquest}, we must also reformulate ramification of $G$-Galois branched covers into a statement about $G$-actions on $X$. To accomplish this, we define the following:

\begin{dfn}\label{stabdfn}
A closed point $P$ on $X$ is a \textit{stabilized point} of a $G$-action if its stabilizer under the $G$-action is non-trivial. The set of stabilized points of a $G$-action is its \textit{stabilized locus}. A $G$-action is \textit{stabilized point free} if its stabilized locus is empty.
\end{dfn}

Using \cref{subfields} and the above definitions, we establish the following correspondence between group actions and equivalence classes of covers:

\begin{thm}\label{subgroupstocovers}
Fix a smooth proper curve $X$ over an algebraically closed field $k$ and a finite group $G$. There is a correspondence between equivalence classes of $G$-Galois branched covers with source $X$ and $G$-actions on $X$. Under this correspondence, the ramification points of the cover are the stabilized points of the action.
\end{thm}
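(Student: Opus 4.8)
The plan is to build the correspondence directly on top of \cref{subfields}. Given a $G$-Galois branched cover $f:X\to Y$, the group $\Aut(X/Y)$ is by definition a subgroup of $\Aut(X)$ isomorphic to $G$, and this assignment is visibly constant on equivalence classes: if $g:Y_1\to Y_2$ is an isomorphism with $g\circ f_1=f_2$, then an automorphism of $X$ commutes with $f_1$ iff it commutes with $f_2$, so $\Aut(X/Y_1)=\Aut(X/Y_2)$ as subgroups of $\Aut(X)$. Conversely, given a $G$-action, i.e.\ a subgroup $H\subseteq\Aut(X)$ with $H\cong G$, I would pass to the invariant subfield $\kappa(X)^H\subseteq\kappa(X)$, which has finite index equal to $|H|$ by Artin's theorem (and the extension $\kappa(X)/\kappa(X)^H$ is Galois with group $H$). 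Via \cref{subfields} this finite-index subfield corresponds to an equivalence class of branched covers $X\to Y$; since $\kappa(X)/\kappa(X)^H$ is Galois with group $H\cong G$, the cover $X\to Y$ is $G$-Galois. The two constructions are mutually inverse: starting from $f:X\to Y$, the invariant subfield of $\Aut(X/Y)$ is exactly $\kappa(Y)$ (again by Artin, using that $\kappa(X)/\kappa(Y)$ is Galois with this group), and \cref{subfields} returns the original equivalence class; starting from $H$, the Galois group of $\kappa(X)/\kappa(X)^H$ is $H$ itself, so we recover $H$. I will need to note that the correspondence is well-defined on the level of \emph{equivalence classes} of injections $G\hookrightarrow\Aut(X)$, which matches the two-step nature of the definition of a $G$-action.

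For the ramification statement, I would work locally. Let $f:X\to Y$ be the cover attached to $H\subseteq\Aut(X)$, and let $P\in X$ lie over $Q\in Y$. Since $k$ is algebraically closed and $\kappa(X)/\kappa(Y)$ is Galois with group $H$, the decomposition group $D_P=\{\sigma\in H:\sigma(P)=P\}$ acts transitively on the fiber over $Q$, and because the residue fields are all $k$ the decomposition group equals the inertia group, so $D_P$ is precisely the stabilizer of $P$ under the $G$-action. The standard ramification theory (e.g.\ via the different, or directly: the cover $\Spec\widehat{\mathcal{O}}_{X,P}\to\Spec\widehat{\mathcal{O}}_{Y,Q}$ has degree $|D_P|$ and is totally ramified) gives that $f$ is ramified at $P$ exactly when $D_P$ is nontrivial, i.e.\ exactly when $P$ is a stabilized point of the $G$-action. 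I would phrase this as: the ramification index $e_P$ of $f$ at $P$ equals $|\mathrm{Stab}_G(P)|$, so $e_P>1$ iff $P$ is stabilized.

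The main obstacle — really the only nontrivial point — is the identification of the inertia group with the full stabilizer and the clean translation of ``ramified'' into ``inertia nontrivial.'' In characteristic $p$ this requires some care because the inertia group can be nontrivial with wild part, but over an algebraically closed residue field the decomposition/inertia distinction collapses and the stabilizer of $P$ in $H$ is literally the local Galois group of $\widehat{\mathcal{O}}_{X,P}/\widehat{\mathcal{O}}_{Y,Q}$; the ramification index is the order of this group, which exceeds $1$ precisely when the stabilizer is nontrivial. Everything else (well-definedness on equivalence classes, the inverse bijection) reduces to the equivalence of categories between smooth proper curves and function fields together with Artin's fixed-field theorem, both of which I would invoke without detailed verification.
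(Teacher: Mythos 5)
Your proof is correct and follows essentially the same route as the paper: the correspondence is built on \cref{subfields} via invariant subfields and the Galois correspondence, and the ramification statement reduces to the triviality of residue field extensions over an algebraically closed field (you supply more local detail than the paper does, which is fine). One small slip: it is the full group $H$, not the decomposition group $D_P$, that acts transitively on the fiber over $Q$ --- $D_P$ is the stabilizer of $P$ --- but the fact you actually use, namely $e_P=|D_P|$ because residue extensions are trivial, is unaffected.
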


\begin{proof}
We first establish the correspondence. Fix a $G$-action on $X$. This action induces a $G$-action on $\kappa(X)$. By taking the $G$-invariants, we obtain a $G$-Galois field extension $\kappa(X)/ \kappa(X)^G$. By \cref{subfields}, this field extension corresponds to an equivalence class of $G$-Galois branched covers.

If $H,H'\subseteq\Aut(X)$ are distinct $G$-actions, by Galois correspondence the invariant subfields of the actions on $\kappa(X)$ will be distinct. Therefore by \cref{subfields}, these actions correspond to distinct equivalence classes of branched covers. These two operations are inverse to each other, establishing the desired correspondence.

Finally, since $X$ is defined over an algebraically closed field, extensions of residue fields are trivial. Thus, a point of $X$ being a stabilized point is equivalent to the point being a ramification point.
\end{proof}

\subsection{Statement of the Theorem}\label{maingalsect}
By \cref{actoncurves}, we reduce to studying the $G$-actions on $X$ which have as stabilized points a prescribed finite set of points. We will pursue this program to prove the following:

\begin{thm}\label{maingalthm}
Let $X$ be a smooth proper curve over an algebraically closed field $k$, $S$ a finite set of closed points of $X$, and $G$ a finite group. There exist infinitely many equivalence classes of $G$-Galois covers with ramification locus $S$ if and only if all the following hold:

\benum
\item $\cha(k)=p>0$

\item $X\cong \mathbb{P}^1_k$

\item $G\cong (\mathbb{Z}/p\ZZ)^m$, where $m\in\NN$

\item $|S|=1$.
\eenum

When these conditions hold, equivalence classes of such covers ramified at $S$ are in correspondence with rank $m$ additive subgroups of $k$, as described in \cref{wildp1}. Furthermore, when there are finitely many equivalence classes of $G$-Galois covers, there are no non-constant families by \cref{isotrivthm}.
\end{thm}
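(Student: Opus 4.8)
By \cref{subgroupstocovers}, equivalence classes of $G$-Galois covers with source $X$ and ramification locus $S$ correspond bijectively to subgroups $H\subseteq\Aut(X)$ with $H\cong G$ and stabilized locus equal to $S$. So the content of the theorem is a statement about such subgroups, and the final sentence about families is then immediate from \cref{isotrivthm}.

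\emph{Sufficiency of (1)--(4).} Assume $\cha(k)=p>0$, $X\cong\PP^1_k$, $G\cong(\ZZ/p\ZZ)^m$, and $S=\{P\}$. Choose a coordinate in which $P=\infty$. Any finite $p$-subgroup of $\PGL_2(k)$ fixes a point and is therefore conjugate into the unipotent radical $U=\{z\mapsto z+c : c\in k\}\cong(k,+)$; the subgroups of $U$ isomorphic to $(\ZZ/p\ZZ)^m$ are exactly the $m$-dimensional $\FF_p$-subspaces $V\subseteq k$, and each such $H$ fixes only $\infty$, so its stabilized locus is $\{\infty\}=S$. Since $k$ contains $\overline{\FF_p}$, it is infinite-dimensional over $\FF_p$ and has infinitely many such $V$. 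The induced bijection with rank $m$ additive subgroups of $k$, and the fact that these sweep out a genuine non-constant family (for instance via $y=\prod_{v\in V}(x-v)$), is the content of \cref{wildp1}.

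\emph{Necessity.} Suppose there are infinitely many subgroups $H\cong G$ with stabilized locus $S$. I would argue by the genus $g$ of $X$. If $g\ge 2$ then $\Aut(X)$ is finite, a contradiction. If $g=1$, fix an origin so that $\Aut(X)=E(k)\rtimes F$ with $F$ finite; for a finite subgroup $H$, the translation part $H_0:=H\cap E(k)$ lies among the finitely many bounded-order finite subgroups of $E(k)$, and $H/H_0$ embeds in $F$. When $X/H$ has genus one the cover is \'etale, so $S=\varnothing$ and $H$ is an isogeny kernel, of which there are finitely many by \cref{tog1}. When $X/H$ has genus zero, after fixing $H_0$ and the image of $H$ in $F$ the possible $H$ form a set of $1$-cocycles whose cohomology classes constitute a finite group ($H^1$ of the finite group $H/H_0$ with the divisible coefficient module $E(k)/H_0$ is finite), and within a single cohomology class the $H$ are translation-conjugates of a fixed one, only finitely many of which preserve the finite set $S$; this contradicts infinitude. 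Hence $g=0$, so $X\cong\PP^1_k$ and (2) holds. If moreover $\cha(k)=0$, the Schubert-calculus proof of \cref{eh} goes through over any algebraically closed field of characteristic zero, since the absence of inseparable maps keeps the pertinent Schubert intersection transverse; this yields finiteness, a contradiction, so $\cha(k)=p>0$ and (1) holds.

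\emph{The remaining case ($X\cong\PP^1_k$, $\cha(k)=p>0$).} It remains to show that infinitely many subgroups $H\cong G$ of $\PGL_2(k)$ with stabilized locus $S$ forces $G\cong(\ZZ/p\ZZ)^m$ and $|S|=1$; this is the heart of the argument. I would invoke Dickson's classification of the finite subgroups of $\PGL_2(k)$ in characteristic $p$: elementary abelian $p$-groups; cyclic groups of order prime to $p$; the Borel-type semidirect products $(\ZZ/p\ZZ)^m\rtimes C$ with $C$ cyclic of order prime to $p$; the dihedral and polyhedral groups $D_n$, $A_4$, $S_4$, $A_5$; and the modular groups $\PSL_2(q)$, $\PGL_2(q)$ with $q$ a power of $p$. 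Every group on this list other than an elementary abelian $p$-group contains an element of some order $\ell>1$ prime to $p$; such an element is semisimple and fixes exactly two points of $\PP^1$, which therefore lie in $S$. The plan is to show, going through Dickson's list, that the stabilized locus rigidifies $H$ up to finitely many possibilities: the two prime-to-$p$ fixed points, together with the orbits of the remaining stabilized points under the maximal normal $p$-subgroup of $H$ (an $\FF_p$-subspace when $H$ lies in a Borel), pin down enough coordinate data that any compatible subgroup of $\PGL_2(k)$ must lie in a fixed finite group (the setwise stabilizer of a finite configuration of points, or $\PGL_2(\FF_q)$ in the modular case). Thus infinitely many such $H$ forces $G\cong(\ZZ/p\ZZ)^m$; and then, since such an $H$ is conjugate into a Borel and so fixes exactly one point and stabilizes nothing else, $|S|=1$. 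Combined with the sufficiency proved above, this gives the theorem. The main obstacle is precisely the uniform ``rigidification by the stabilized locus'' across the cases of Dickson's list, and I expect it to absorb most of the work; the genus reductions and the construction of the family are comparatively routine.
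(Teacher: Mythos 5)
Your reduction to subgroups of $\Aut(X)$ via \cref{subgroupstocovers}, the genus $\geq 2$ case, and the sufficiency direction all match the paper. Your genus $1$ argument is a genuinely different (cohomological) route from the paper's: the paper works elementarily with $\Aut(X)\cong T_X\rtimes\Aut_0(X)$, showing first that only finitely many automorphisms have a nontrivial fixed locus inside $S$ (\cref{fpellaction}) and then that only finitely many fixed-point-free elements can cohabit a group with one of them (\cref{fpellthm}); your $H^1(H/H_0, E(k)/H_0)$ finiteness argument is workable but uses more machinery for the same payoff. Your characteristic-zero step via Eisenbud--Harris is also a detour the paper avoids: the paper's genus-$0$ analysis treats $p=0$ uniformly as the ``$p$-regular'' case of the classification, so no separate transcendental input is needed.

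The genuine gap is the case you yourself flag as the heart of the matter: $X\cong\PP^1_k$, $\cha(k)=p>0$, $G$ not elementary abelian. You state a plan (``the stabilized locus rigidifies $H$ up to finitely many possibilities'') but do not carry it out, and as stated the plan fails in at least one case on the list: for $G$ cyclic of order $n$ prime to $p$, the stabilized locus has exactly two points, and the setwise stabilizer in $\PGL_2(k)$ of a two-point set is the (infinite) normalizer of a torus, so ``$H$ lies in the setwise stabilizer of a finite configuration, which is finite'' does not apply. The paper's resolution (\cref{tamep1}, \cref{wildp1}) is to use Faber's classification of finite subgroups of $\PGL_2(k)$ up to \emph{conjugacy} and then check, representative by representative, that every group other than the elementary abelian $p$-groups and the prime-to-$p$ cyclic groups has at least \emph{three} stabilized points --- in particular one must exhibit a third stabilized point for the Borel-type groups $\Gamma\rtimes\mu_n$ with $n>1$ (namely $g/(1-\zeta_n)$ for $0\neq g\in\Gamma$) and for $\PSL_2(\FF_{p^n})$, $\PGL_2(\FF_{p^n})$ --- whence $H$ lies in the finite setwise stabilizer of $S$; the cyclic case is then handled separately by observing that conjugation by any element preserving $\{0,\infty\}$ maps the diagonal $\mu_n$ onto itself, so there is exactly one such subgroup with that stabilized locus. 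Until you supply these verifications (and the cyclic fix), the necessity of conditions (3) and (4) is not established.
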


\begin{rmk}\label{maingalrmk}
Even without imposing the Galois condition or restricting the ramification points, \cref{defrthm} and \cref{tog1} show that $X$ has finitely many equivalence classes of maps of fixed degree to curves which are not $\mathbb{P}^1_k$. Hence, the new content of \cref{maingalthm} is resolving the case where the target curve is $\PP^1_k$.
\end{rmk}

As a corollary of \cref{maingalthm}, we have a statement about tame Galois branched covers similar to the case when considering branched covers of a fixed base:

\begin{cor}\label{notamegalcor}
Let $X$ be a smooth proper curve over an algebraically closed field $k$ and $G$ a finite group. There are only finitely many equivalence classes of $G$-Galois, tame covers with source $X$ with fixed ramification points. This implies that there are no non-constant tame $G$-Galois families.
\end{cor}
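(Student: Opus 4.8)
The plan is to derive everything from \cref{maingalthm}: the exceptional case singled out there is precisely the one forbidden by tameness, so outside that case the finiteness is already recorded in \cref{maingalthm}, and inside it no cover can be tame.

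For the finiteness statement, fix the finite set $S$ of closed points of $X$ and argue by contraposition. Suppose there are infinitely many equivalence classes of tame $G$-Galois covers with source $X$ and ramification locus $S$. A fortiori there are infinitely many equivalence classes of $G$-Galois covers with source $X$ and ramification locus $S$, so by \cref{maingalthm} all of conditions (1)--(4) hold: $\cha(k)=p>0$, $X\cong\PP^1_k$, $G\cong(\ZZ/p\ZZ)^m$, and $|S|=1$; moreover $m\geq 1$, since $m=0$ would make $G$ trivial, admitting a single equivalence class of cover. Write $S=\{P\}$. Each of the covers in question is ramified at $P$, so by \cref{subgroupstocovers} the stabilizer of $P$ under the associated $G$-action is a nontrivial subgroup of $(\ZZ/p\ZZ)^m$. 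This stabilizer is the inertia group at $P$, so the ramification index at $P$ equals its order, which is a positive power of $p$; hence the cover is wildly ramified at $P$, contradicting tameness. Therefore only finitely many such equivalence classes exist.

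For the statement about families, a non-constant tame $G$-Galois family (with ramification locus confined to a fixed finite set $S$) is in particular a family of degree-$|G|$ maps with source $X$ over a connected base whose $k$-fibers are tame $G$-Galois covers each ramified along some subset of $S$. Since $S$ has only finitely many subsets and, by the previous paragraph, each such subset is the ramification locus of only finitely many equivalence classes of tame $G$-Galois covers with source $X$, the fibers lie in finitely many equivalence classes. \cref{isotrivthm} then forces the family to be constant.

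The deduction is short; the only point needing care is the meaning of a ``tame $G$-Galois family,'' and in particular the need to keep the ramification locus inside a fixed finite set. That this hypothesis cannot be dropped is visible already for $G=\ZZ/2$ in any characteristic: a degree-$2$ cover $\PP^1_k\to\PP^1_k$ is determined up to equivalence by the pair of points of the source fixed by the deck involution, and letting this pair vary yields a non-constant tame $\ZZ/2$-Galois family with source $\PP^1_k$. Once the ramification locus is fixed (or confined to a fixed finite set), the argument above applies directly.
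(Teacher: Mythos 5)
Your proof is correct and follows essentially the same route as the paper: both deduce from \cref{maingalthm} that infinitely many classes would force $\cha(k)=p>0$, $X\cong\PP^1_k$, $G\cong(\ZZ/p\ZZ)^m$, and a nonempty ramification locus, whence the inertia at a ramification point has order a power of $p$ and the cover is wild, contradicting tameness. The only cosmetic difference is that the paper invokes \cref{p1fp} to produce a stabilized point of an order-$p$ element, while you extract the wildly ramified point directly from the condition $|S|=1$; your closing remark about confining the ramification locus to a fixed finite set is a sensible reading of the (slightly terse) statement and matches the paper's intent.
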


This follows since the conditions of \cref{maingalthm} require $G$ to have an element of order $p$ and be acting on $\PP^1_k$. By \cref{p1fp} below, every automorphism of $\PP^1_k$ has at least one stabilized point. Hence, such a cover is wildly ramified.

As mentioned in the introduction, \cref{maingalthm} will be proven in three cases: when the genus of $X$ is zero, one, or greater than one. The genus greater than one case is resolved by the following:

\begin{rmk}
Let $X$ be a smooth proper curve of genus greater than one. Then the automorphism group of $X$ is finite and so there are only finitely many subgroups of $\Aut(X)$. Therefore, by \cref{subgroupstocovers}, there are only finitely many equivalence classes of Galois branched covers with source $X$, even without specifying the ramification points or the group.
\end{rmk}

\subsection{Genus 1 case}\label{genusonegalsect}
For the entirety of this section, fix a smooth proper genus 1 curve $X$ over an algebraically closed field $k$ and a finite group $G$. Since $X$ is defined over an algebraically closed field, we may fix a $k$-rational base point $0$ to consider $X$ as an elliptic curve. Recall that $\Aut(X)\cong T_X \rtimes \Aut_0(X)$, where $T_X$ consists of the translations of $X$ by closed points and $\Aut_0(X)$ is the finite group of automorphisms of $X$ fixing the base point 0. We will denote the identity element of $\Aut_0(X)$ by 1.

First, we specify using the above isomorphism which automorphisms have fixed points and which are fixed point free:

\begin{lem}\label{fpfpf}
Fix $\varphi\in\Aut(X)$. Using the isomorphism $\Aut(X)\cong T_X \rtimes \Aut_0(X)$, write $\varphi=(P,\sigma)$ where $P$ is a closed point of $X$ and $\sigma\in\Aut_0(X)$. Then $\varphi$ is fixed point free precisely when $\sigma = 1$  and $P\neq 0$.
\end{lem}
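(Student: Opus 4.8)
The plan is to rephrase the fixed-point condition on $\varphi=(P,\sigma)$ as an equation in the group law of the elliptic curve $X$ and then split on whether $\sigma$ equals the identity $1\in\Aut_0(X)$. Under the isomorphism $\Aut(X)\cong T_X\rtimes\Aut_0(X)$, the automorphism $\varphi=(P,\sigma)$ acts on a closed point $Q$ by $\varphi(Q)=\sigma(Q)+P$, using that $\sigma\in\Aut_0(X)$ is a homomorphism of the elliptic curve and that $T_X$ acts by translation. Thus $Q$ is fixed by $\varphi$ exactly when $\sigma(Q)+P=Q$, i.e.\ when $(\mathrm{id}_X-\sigma)(Q)=P$, where $\mathrm{id}_X-\sigma$ is the difference taken inside $\mathrm{End}(X)$ (which contains $\Aut_0(X)$ as a subset). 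Hence $\varphi$ is fixed point free precisely when $P$ lies outside the image of the endomorphism $\mathrm{id}_X-\sigma$, and the whole statement reduces to identifying that image.

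First I would treat the case $\sigma=1$. Then $\mathrm{id}_X-\sigma$ is the zero endomorphism, whose image is $\{0\}$, so $\varphi$ is fixed point free if and only if $P\neq 0$; and if $P=0$ then $\varphi=\mathrm{id}_X$, which has fixed points. This already gives the "if" direction and the $\sigma=1$ half of the "only if" direction.

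Next, for $\sigma\neq 1$, I would observe that $\mathrm{id}_X-\sigma$ is a nonzero element of $\mathrm{End}(X)$: if it were the zero map we would have $\sigma=\mathrm{id}_X$. A nonzero homomorphism of elliptic curves is a nonconstant morphism of smooth proper curves, hence surjective (this holds in every characteristic, including when the isogeny is inseparable, since $k$ is algebraically closed). Therefore the equation $(\mathrm{id}_X-\sigma)(Q)=P$ is solvable for every $P$, so $\varphi$ has a fixed point. Combining the two cases yields the claimed equivalence.

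I do not anticipate a real obstacle here; the only points needing care are getting the semidirect-product convention for the action of $(P,\sigma)$ correct so that the fixed-point equation is exactly $(\mathrm{id}_X-\sigma)(Q)=P$, and justifying that $\mathrm{id}_X-\sigma$ makes sense as an endomorphism (because $\Aut_0(X)\subseteq\mathrm{End}(X)$ and $\mathrm{End}(X)$ is a ring) together with the surjectivity of a nonzero isogeny on $k$-points.
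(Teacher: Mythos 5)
Your proposal is correct and follows essentially the same route as the paper: rewrite the fixed-point condition as $(1-\sigma)(Q)=P$, note that for $\sigma\neq 1$ the endomorphism $1-\sigma$ is nonzero and hence surjective so fixed points always exist, and handle $\sigma=1$ by observing the image of the zero map is $\{0\}$. The paper additionally remarks that $1-\sigma$ is separable (so its kernel has order equal to its degree), but that refinement is not needed for this lemma and your surjectivity argument via nonconstant morphisms of smooth proper curves is exactly what is required.
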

\begin{proof}
Let $\varphi=(P,\sigma)$ be an arbitrary automorphism of $X$. Since $\varphi(Q)=\sigma(Q)+P$, the fixed points of $\varphi$ are the points $Q$ such that $(1-\sigma)(Q)=P$.

If $\sigma\neq 1$, then $(1-\sigma)$ is a surjective group homomorphism with kernel $K$ of cardinality equal to the degree of the map $(1-\sigma)$ ($\varphi$ being an automorphism implies that $(1-\sigma)$ is separable). Therefore we have a short exact sequence:

$$\xymatrix{0\ar[r] & K\ar[r] & X\ar[r]^{(1-\sigma)} & X\ar[r] & 0.}$$
The $|K|$ preimages of $P$ are precisely the fixed points of $\varphi$. Therefore, $\sigma\neq 1$ implies that $\varphi$ is not fixed point free.

Now suppose $\sigma=1$. Since the fixed points of $\varphi$ are the points $Q$ such that $(1-\sigma)(Q)=P$, $\varphi$ has fixed points if and only if $P=0$, in which case $\varphi$ is the identity.
\end{proof}

We now show that there are finitely many stabilized point free $G$-actions on $X$:

\begin{lem}\label{fpfellaction}
Let $G$ be a finite group. There are only finitely many equivalence classes of unramified $G$-Galois covers with source $X$. Stated another way, there are only finitely many $G$-actions on $X$ which are stabilized point free.
\end{lem}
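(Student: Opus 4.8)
The plan is to combine \cref{fpfpf} with the finiteness of torsion subgroups of an elliptic curve. The key observation is that stabilized point freeness forces the corresponding subgroup of $\Aut(X)$ to consist entirely of translations, and the translation group $T_X$ has only finitely many finite subgroups of any given order.

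First I would prove: if $H\subseteq\Aut(X)$ is a $G$-action that is stabilized point free, then $H\subseteq T_X$. Using the isomorphism $\Aut(X)\cong T_X\rtimes\Aut_0(X)$, write an arbitrary non-identity element $\varphi\in H$ as $(P,\sigma)$. Since $H$ is stabilized point free, $\varphi$ has no fixed point, so \cref{fpfpf} gives $\sigma=1$. As this holds for every non-identity element (and trivially for the identity), $H\subseteq T_X$. Conversely, any subgroup of $T_X$ is stabilized point free by the other direction of \cref{fpfpf}, but only the inclusion $H\subseteq T_X$ is needed here. Identifying $T_X$ with $X(k)$ as abelian groups (translation by $P\leftrightarrow P$), an $H\cong G$ has order $|G|$, hence lies in the $|G|$-torsion subgroup $X[|G|]$.

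Finally, since $X$ is an abelian variety of dimension one, $X[|G|]$ is a finite group (of order dividing $|G|^2$), so it has only finitely many subgroups, a fortiori finitely many isomorphic to $G$. By \cref{subgroupstocovers} these subgroups are in bijection with equivalence classes of unramified $G$-Galois covers with source $X$, which proves the lemma. There is no real obstacle here: the entire content is carried by \cref{fpfpf}. The only point requiring care is that the argument is uniform in the characteristic — when $\cha(k)=p>0$ the group $X[p^r]$ may be cyclic or trivial rather than $(\ZZ/p^r\ZZ)^2$, but it is finite in all cases, and finiteness is all that is used.
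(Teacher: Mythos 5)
Your proof is correct and follows essentially the same route as the paper: apply \cref{fpfpf} to see that every non-identity element of a stabilized point free action is a translation by a nonzero $|G|$-torsion point, and conclude from the finiteness of $X[|G|]$. The extra remark about the positive-characteristic degeneration of the torsion subgroup is fine but not needed, since only finiteness is used.
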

\begin{proof}
Let $H\subseteq\Aut(X)$ be a stabilized point free $G$-action. By \cref{fpfpf}, each non-identity element of $H$ must be of the form $(P,1)$ where $P\neq 0$. Let $n=|G|$. Since $(P,1)\in H$, it must be the case that $P$ is an $n$-torsion point of $X$. There are only finitely many such $P$, and so there are finitely many possible such $H$.
\end{proof}

Next, we analyze automorphisms of $X$ with fixed points contained in a specified set:

\begin{lem}\label{fpellaction}
Let $S$ be a finite nonempty set of closed points of $X$. There are only finitely many $\varphi\in \Aut(X)$ which have non-trivial fixed locus contained in $S$.
\end{lem}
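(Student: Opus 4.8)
The plan is to use the semidirect product decomposition $\Aut(X)\cong T_X\rtimes\Aut_0(X)$ together with \cref{fpfpf}, reducing the problem to counting pairs consisting of a linear part and a fixed point.

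First I would note that if $\varphi$ has non-trivial (i.e.\ nonempty) fixed locus contained in $S$, then $\varphi$ is not the identity, since the identity fixes all of $X$, which is infinite and hence not contained in the finite set $S$. Writing $\varphi=(P,\sigma)$ with $P$ a closed point of $X$ and $\sigma\in\Aut_0(X)$, \cref{fpfpf} then forces $\sigma\neq 1$: indeed, if $\sigma=1$ the automorphism $\varphi$ is either fixed point free (when $P\neq 0$), contradicting that its fixed locus is nonempty, or the identity (when $P=0$), which has already been excluded.

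The key observation is that such a $\varphi$ is determined by $\sigma$ together with any single one of its fixed points. Concretely, if $Q$ is a fixed point of $\varphi=(P,\sigma)$, then $Q\in S$ and the relation $\varphi(Q)=\sigma(Q)+P=Q$ gives $P=Q-\sigma(Q)$. Hence every $\varphi$ under consideration lies in the set $\{\,(Q-\sigma(Q),\,\sigma)\ :\ \sigma\in\Aut_0(X),\ Q\in S\,\}$. Since $X$ has genus one, $\Aut_0(X)$ is a finite group, and $S$ is finite by hypothesis, so this set has at most $|\Aut_0(X)|\cdot|S|$ elements, which proves the lemma.

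I do not anticipate a real obstacle here: the argument is essentially immediate from the group law on $X$. The only point requiring a little care is invoking \cref{fpfpf} correctly to pin down that $\sigma\neq 1$ and that the identity is the sole automorphism with $\sigma=1$ having nonempty fixed locus, so that it is safely excluded by the condition $\mathrm{Fix}(\varphi)\subseteq S$.
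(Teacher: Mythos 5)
Your proof is correct and follows essentially the same route as the paper: both reduce to the observation that, via $\Aut(X)\cong T_X\rtimes\Aut_0(X)$ and \cref{fpfpf}, an automorphism with a fixed point $Q\in S$ must have $\sigma\neq 1$ and translation part $P=(1-\sigma)(Q)$, so it is determined by the pair $(\sigma,Q)$ drawn from the finite set $\Aut_0(X)\times S$. Your explicit formula $P=Q-\sigma(Q)$ is just a slightly more direct phrasing of the paper's statement that $P$ is the unique point making $Q$ a fixed point of an automorphism with linear part $\sigma$.
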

\begin{proof}
Fix $Q\in S$. Since any $\varphi$ with non-trivial fixed locus contained in $S$ must fix at least one point in $S$, it suffices to show that there are only finitely many automorphisms of $X$ which fix $Q$. Suppose $\varphi\in \Aut(X)$ is one such automorphism, and that it is not the identity. By \cref{fpfpf}, $\varphi$ is of the form $(P,\sigma)$ where $\sigma\neq 1$.  Since $(1-\sigma)$ is a surjective group homomorphism, we have a short exact sequence:

$$\xymatrix{0\ar[r] & K\ar[r] & X\ar[r]^{(1-\sigma)} & X\ar[r] & 0}$$
where $K$ is the kernel of $(1-\sigma)$. As in the proof of \cref{fpfpf}, the fixed points of $\varphi$ are precisely the points in the preimage of $P$ under $(1-\sigma)$. Therefore $P$ is the unique point for which $Q$ is a fixed point of an automorphism of the form $(R,\sigma)$. Since there are finitely many $\sigma\in\Aut_0(X)$, $Q$ is a fixed point for only finitely many $\varphi\in \Aut(X)$.
\end{proof}

Using \cref{fpellaction}, we can prove a strengthening of \cref{maingalthm} for ramified covers in the genus 1 case:

\begin{lem}\label{fpellthm}
Let $S$ be a finite set of closed points of $X$. There exist only finitely many group actions on $X$ with nonempty stabilized locus contained in $S$.
\end{lem}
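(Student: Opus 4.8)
The plan is to prove that every group action with nonempty stabilized locus contained in $S$ lies inside one fixed finite subset of $\Aut(X)$ that depends only on $X$ and $S$; since a finite set has only finitely many subsets, this immediately bounds the number of such actions.

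I would begin by using \cref{fpellaction} to fix, once and for all, the finite list $\varphi_1,\dots,\varphi_r$ of \emph{all} automorphisms of $X$ with nonempty fixed locus contained in $S$. Writing each $\varphi_i=(P_i,\sigma_i)$ under the isomorphism $\Aut(X)\cong T_X\rtimes\Aut_0(X)$, \cref{fpfpf} forces $\sigma_i\neq 1$: indeed $\varphi_i$ is not the identity (whose fixed locus is all of $X$, not a finite set) yet it has a fixed point, so it cannot be of the form $(P,1)$. Now let $H\subseteq\Aut(X)$ be any group action whose stabilized locus is nonempty and contained in $S$. Any non-identity element of $H$ possessing a fixed point has its fixed locus inside the stabilized locus of $H$, hence inside $S$, so it occurs among $\varphi_1,\dots,\varphi_r$; in particular $H$ contains some $\varphi_i$ (the stabilized locus of $H$ being nonempty), and every $H$-conjugate of $\varphi_i$ is again some $\varphi_j$, since it is a non-identity element of $H$ with nonempty fixed locus.

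The core of the argument is then a direct computation. For an arbitrary $g=(Q,\tau)\in H$, the conjugate $g\varphi_i g^{-1}$ equals some $\varphi_j$, and multiplying out in the semidirect product gives
$$g\varphi_i g^{-1}=\bigl(\tau(P_i)+(1-\tau\sigma_i\tau^{-1})(Q),\ \tau\sigma_i\tau^{-1}\bigr),$$
whence $\sigma_j=\tau\sigma_i\tau^{-1}$ and $(1-\sigma_j)(Q)=P_j-\tau(P_i)$. As $\sigma_j\neq 1$, the endomorphism $1-\sigma_j$ of the elliptic curve $X$ is nonzero, hence an isogeny with finite kernel, so $Q$ is confined to the finite coset $(1-\sigma_j)^{-1}(P_j-\tau(P_i))$. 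Letting $i,j$ range over $\{1,\dots,r\}$ and $\tau$ over the finite group $\Aut_0(X)$ produces finitely many such cosets; their union is a finite set $\Sigma\subseteq X(k)$ depending only on $X$ and $S$, and we have shown that the $T_X$-component of \emph{every} element of $H$ lies in $\Sigma$. Since the $\Aut_0(X)$-component of any element of $\Aut(X)$ lies in the finite group $\Aut_0(X)$, it follows that $H$ is contained in the finite set $E:=\{(Q,\tau):Q\in\Sigma,\ \tau\in\Aut_0(X)\}$. As $E$ depends only on $X$ and $S$, there are at most $2^{|E|}$ possibilities for $H$, which is the assertion (and if no such $H$ exists — for instance when $S=\varnothing$ — the statement holds vacuously).

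I expect the main obstacle to be resisting the temptation to argue only via a bound on $|H|$: such a bound does hold (the orbit of a fixed point of $\varphi_i$ stays in $S$, and point stabilizers in $\Aut(X)$ are finite), but $\Aut(X)$ contains infinitely many finite subgroups of any bounded order — for example the order-two subgroups generated by the involutions $(P,-1)$, or the torsion subgroups $X[n]\subseteq T_X$ — so one genuinely has to locate the elements of $H$, not merely count them. The conjugation identity above, fed by the finiteness in \cref{fpellaction}, is what pins down the $T_X$-components; the only subtle input is the observation, via \cref{fpfpf}, that the special automorphisms $\varphi_i$ have nontrivial $\Aut_0(X)$-component, for that is exactly what makes $1-\sigma_j$ an isogeny and thereby forces $Q$ into a finite set.
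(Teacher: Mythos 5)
Your proof is correct and rests on the same two ingredients as the paper's: the finiteness from \cref{fpellaction} and the fact that $1-\sigma$ is an isogeny with finite fibres when $\sigma\neq 1$, which pins the translation components of all of $H$ into a finite set. The only cosmetic difference is that you transport an arbitrary $g\in H$ into the finite list by conjugating $\varphi_i$, whereas the paper composes a fixed-point-free element with $\varphi$; both computations are instances of the same mechanism, so this is essentially the paper's argument.
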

\begin{proof}
Fix one such group action. Since the action has nonempty stabilized locus, let $\varphi$ be a non-identity element of the group action which has a fixed point. By \cref{fpellaction}, there are only finitely many automorphisms with non-trivial stabilized locus contained in $S$.  It therefore suffices to show that only finitely many fixed point free automorphisms of $X$ can occur in the same group action as $\varphi$.

Since $\varphi$ is not the identity and has fixed points, by \cref{fpfpf} it is of the form $(P,\sigma)$ where $\sigma\neq 1$. Let $\psi$ be a fixed point free automorphism of $X$. Again by \cref{fpfpf}, $\psi$ is of the form $(Q,1)$ where $Q\neq 0$. If $\varphi$ and $\psi$ occur in the same group action, so does $\psi\circ\varphi=(P+Q,\sigma)$. Since $\sigma\neq 1$, $(1-\sigma)$ is a surjective group homomorphism  and so it induces a short exact sequence:

$$\xymatrix{0\ar[r] & K\ar[r] & X\ar[r]^{(1-\sigma)} & X\ar[r] & 0}$$
where $K$ is the kernel of $(1-\sigma)$. The fixed points of $\psi\circ\varphi=(P+Q,\sigma)$ are the preimages of $P+Q$ under $(1-\sigma)$. However, $S$ is finite so there are only finitely many $Q$ for which the preimage of $P+Q$ is contained in $S$. This implies that only finitely many $\psi$ can occur in the same group action as $\varphi$ if the stabilized locus is contained in $S$.
\end{proof}

\begin{rmk}
Since $X$ is an elliptic curve, the induced cover being ramified is equivalent by the Riemann-Hurwitz formula to the target space being the projective line. Hence, another way to state \cref{fpellthm} is that there are only finitely many equivalence classes of Galois branched covers from a fixed elliptic curve $X$ to $\PP^1_k$ after fixing the ramification locus.
\end{rmk}

The genus 1 case of \cref{maingalthm} follows as a corollary of \cref{fpellthm} and \cref{fpfellaction}:
\begin{cor}
Let $X$ be a smooth proper genus 1 curve over an algebraically closed field $k$, $S$ a finite set of closed points of $X$, and $G$ a finite group. There exist only finitely many equivalence classes of $G$-Galois branched covers with ramification locus contained in $S$.
\end{cor}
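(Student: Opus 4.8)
\textit{Proof proposal.} The plan is to use the dictionary of \cref{subgroupstocovers} to convert the statement into a statement about $G$-actions on $X$, and then to dispose of that by splitting into the stabilized point free case and the genuinely ramified case, which are handled by \cref{fpfellaction} and \cref{fpellthm} respectively.

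First I would invoke \cref{subgroupstocovers}: equivalence classes of $G$-Galois branched covers with source $X$ are in bijection with $G$-actions on $X$, and under this bijection the ramification points of a cover correspond exactly to the stabilized points of the action. Consequently the equivalence classes being counted correspond precisely to the $G$-actions on $X$ whose stabilized locus is contained in $S$, and it suffices to bound the number of such $G$-actions. Next I would partition these actions according to their stabilized locus: either the stabilized locus is empty, i.e.\ the action is stabilized point free, or it is nonempty and contained in $S$. By \cref{fpfellaction} there are only finitely many $G$-actions of the first kind. For the second kind, every $G$-action is in particular a group action in the sense of a finite subgroup of $\Aut(X)$, so \cref{fpellthm} applies and gives finitely many group actions — hence a fortiori finitely many $G$-actions — with nonempty stabilized locus contained in $S$. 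Adding these two finite numbers yields the desired finiteness.

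There is no real obstacle here; the substantive work has already been carried out in \cref{fpellaction}, \cref{fpellthm}, and \cref{fpfellaction}, and what remains is bookkeeping. The only point worth a remark is the degenerate case $S=\emptyset$: there "ramification locus contained in $S$" forces the cover to be unramified, which is exactly the stabilized point free case covered by \cref{fpfellaction} (and \cref{fpellthm} is then vacuous), so the statement still holds. Together with the genus zero and genus greater than one cases this completes the proof of the genus one part of \cref{maingalthm}.
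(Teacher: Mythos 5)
Your proposal is correct and is exactly the paper's argument: the paper states the corollary follows from \cref{fpellthm} together with \cref{fpfellaction}, via the correspondence of \cref{subgroupstocovers}, splitting into the stabilized point free case and the nonempty stabilized locus case just as you do. Your handling of the degenerate case $S=\emptyset$ is a fine extra check but nothing more is needed.
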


\subsection{Genus 0 case}\label{genuszerogalsect}
For the entirety of this section we will assume $X=\mathbb{P}^1_k$, where $k$ is an algebraically closed field of characteristic $p$. When $p=0$, the statements concerning the case where $p$ is the order of an element or divides the order of a group are vacuous.

Fix a finite group $G$ and a finite set $S$ of closed points of $\PP^1_k$. To recall, we shall study the subgroups of $\Aut(\PP^1_k)$ isomorphic to $G$ with stabilized locus $S$. In doing so, we will make use of the following observation:

\begin{rmk}
Let $k$ be an algebraically closed field. Then $\Aut_k(\mathbb{P}^1)\cong \PGL_2(k)$. We shall fix the isomorphism between these two groups sending a fractional linear transformation $\varphi=(ax+b)/(cx+d)$ to the equivalence class of $$M(\varphi)=\begin{pmatrix} a & b \\ c & d\end{pmatrix}.$$ Under this isomorphism, $\varphi$ acting on a point with projective coordinates $[x:y]$ corresponds to $M(\varphi)$ acting on the column vector $[x\ y]$.
\end{rmk}

Throughout this section, we will use the following fact:

\begin{lem}\label{p1fp}
Let $\varphi\in \Aut(\PP_k^1)$ have finite order and not be the identity automorphism. Then $\varphi$ has exactly one or two fixed points, and has exactly one fixed point when $\varphi$ has order $p$.
\end{lem}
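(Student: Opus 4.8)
The plan is to work entirely on the matrix side via the isomorphism $\Aut(\PP^1_k)\cong\PGL_2(k)$ fixed above. Let $\varphi\ne\mathrm{id}$ have finite order, and pick a representative matrix $M=M(\varphi)\in\GL_2(k)$. A point $[x:y]\in\PP^1_k$ is fixed by $\varphi$ exactly when $[x\ y]^{T}$ is an eigenvector of $M$, so the fixed points of $\varphi$ correspond to the lines through eigenvectors of $M$; there are at most two of these, and at least one since $k$ is algebraically closed (the characteristic polynomial has a root). This already gives the "one or two fixed points'' dichotomy. There are exactly two distinct fixed points when $M$ has two distinct eigenvalues (equivalently $M$ is diagonalizable with $M\not\in k^{\times}\cdot I$, which is automatic since $\varphi\ne\mathrm{id}$), and exactly one fixed point when $M$ has a single eigenvalue with a one-dimensional eigenspace, i.e.\ when $M$ is conjugate to a nontrivial Jordan block $\begin{pmatrix}\lambda & 1\\ 0 & \lambda\end{pmatrix}$.

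Next I would pin down when each case occurs in terms of the order of $\varphi$. Up to scaling we may assume $\det M = 1$, so the eigenvalues are $\lambda, \lambda^{-1}$. If $\varphi$ has a single fixed point, then $M$ is conjugate to $\begin{pmatrix}\lambda & 1 \\ 0 & \lambda\end{pmatrix}$ with $\lambda^2=1$; raising to the $n$-th power, $M^n$ is conjugate to $\begin{pmatrix}\lambda^n & n\lambda^{n-1}\\ 0 & \lambda^n\end{pmatrix}$ (a routine induction), and this is scalar precisely when $n\lambda^{n-1}=0$ in $k$, i.e.\ when $\cha(k)=p>0$ and $p\mid n$. Hence a unipotent-type automorphism has order exactly $p$ in characteristic $p$ (and infinite order in characteristic $0$, which is why the hypothesis of finite order forces us into the diagonalizable case when $p=0$). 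Conversely, if $\varphi$ has order $p$, I must show it has exactly one fixed point: if it had two, $M$ would be conjugate to $\mathrm{diag}(\lambda,\lambda^{-1})$, and then $M^p$ conjugate to $\mathrm{diag}(\lambda^p,\lambda^{-p})$ is scalar iff $\lambda^{2p}=1$ iff $\lambda^{2}=1$ (since $t\mapsto t^p$ is injective on $k$), so already $M^2$ would be scalar and $\varphi$ would have order $1$ or $2$; but $p\ge 2$ and $\varphi$ of order $p$ with $\varphi$ also of order dividing $2$ forces $p=2$ and $\varphi^2=\mathrm{id}$, contradicting $\mathrm{ord}(\varphi)=p=2$ only if we are careful — so here I would instead argue directly that order exactly $p\ge 3$ rules out the diagonal case outright, and for $p=2$ note that a diagonalizable involution $\mathrm{diag}(1,-1)$ does have order $2=p$ but also has two fixed points, so the claim "order $p$ $\Rightarrow$ one fixed point'' would be false.

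I expect the case $p=2$ to be the main subtlety, and I would resolve it as follows: in characteristic $2$, $\lambda^2=1$ forces $\lambda=1$, so $\mathrm{diag}(1,-1)=\mathrm{diag}(1,1)=I$ is not a genuine nonidentity element — the only order-$2$ elements of $\PGL_2(k)$ in characteristic $2$ are the unipotent ones $\begin{pmatrix}1&1\\0&1\end{pmatrix}$, which have a single fixed point. Thus the apparent counterexample collapses precisely because $-1=1$ in characteristic $2$, and the statement holds uniformly. Concretely the argument structure is: (i) eigenvector count gives one or two fixed points always; (ii) two fixed points $\Leftrightarrow$ $M$ diagonalizable with distinct eigenvalues $\lambda\ne\lambda^{-1}$, in which case $M^n$ scalar $\Leftrightarrow \lambda^{2n}=1$, so $\mathrm{ord}(\varphi)$ is the order of $\lambda^2$ in $k^\times$, which is coprime to $p$ when $p>0$; (iii) one fixed point $\Leftrightarrow$ $M$ is (a scalar times) a nontrivial unipotent, in which case $\mathrm{ord}(\varphi)=p$ in characteristic $p>0$; combining (ii) and (iii), $\mathrm{ord}(\varphi)=p$ can only happen in the one-fixed-point case.
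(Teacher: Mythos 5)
Your proof is correct and follows essentially the same route as the paper: pass to $\PGL_2(k)$, count eigenvector lines of a Jordan-form representative to get the one-or-two dichotomy, and observe that the diagonalizable (two-fixed-point) elements have order equal to that of a root of unity, hence prime to $p$, while the nontrivial unipotent (one-fixed-point) elements have order exactly $p$. The worry about $\mathrm{diag}(1,-1)$ in characteristic $2$ is a false alarm that you correctly dispose of (it is scalar, hence the identity in $\PGL_2(k)$), and the summary in (i)--(iii) at the end is a complete and clean version of the argument.
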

\begin{proof}
Using the isomorphism with $\PGL_2(k)$, the fixed points of $\varphi$ correspond to eigenvectors of $M(\varphi)$ up to scaling. Since $k$ is algebraically closed, $M(\varphi)$ is conjugate to a matrix of one of the following forms:

$$\begin{pmatrix} \lambda_1 & 0 \\ 0 & \lambda_2 \end{pmatrix},\ \begin{pmatrix} \lambda & 1 \\ 0 & \lambda \end{pmatrix}.
$$
The automorphisms of $\PP^1_k$ corresponding to conjugate matrices have the same number of fixed points, so it suffices to consider matrices of the above form. The first matrix has $[1\ 0]$ and $[0\ 1]$ as eigenvectors, and hence 0 and $\infty$ as fixed points. For the second matrix, up to scaling $[0\ 1]$ is the only eigenvector, and hence the only fixed point is $\infty$. One can easily check since $\cha(k)=p$ that the order of the first matrix in $\PGL_2(k)$ is prime to $p$ while the order of the second matrix is $p$ (or has infinite order if $p=0$).\end{proof}

We will divide the proof of \cref{maingalthm} into two cases: when $p$ does not divide $|G|$ and when it does. In \cite{fab}, Faber refers to the former as \textit{$p$-regular groups} and the latter as \textit{$p$-irregular groups}. The $p$-regular groups also occur as subgroups of $\PGL_2(\CC)$ (\cite[Theorem C]{fab})), but when $p>0$ there are $p$-irregular subgroups of $\PGL_2(k)$ which do not occur as subgroups of $\PGL_2(\CC)$ (\cite[Theorem 6.1]{fab}).

To simplify the statement of theorems and proofs for the remainder of this section, we introduce the following notation:

\begin{notation}
For a field $k$, $\mu_n(k)$ will denote the $n$th roots of unity in $k^\times$. A primitive $n$th root of unity in $k^\times$ will be denoted by $\zeta_n$.
\end{notation}

In the $p$-regular case we will make use of the following classification of subgroups up to conjugacy:

\begin{thm}(\cite[Theorem C]{fab})\label{tamep1ref}
Let $k$ be an algebraically closed field of characteristic $p$. Every finite $p$-regular subgroup of $\PGL_2(k)$ is conjugate to one of the following groups:

\benum
\item If $p\nmid n$, $\begin{pmatrix} \mu_n(k) & 0 \\ 0 & 1\end{pmatrix}$. This group is isomorphic to $\ZZ/n\ZZ$.

\item If $p\nmid n$ and $p\neq 2$, $\begin{pmatrix} \mu_n(k) & 0 \\ 0 & 1\end{pmatrix}\rtimes \left\langle\begin{pmatrix}0 & 1 \\ 1 & 0\end{pmatrix}\right\rangle$. This group is isomorphic to $D_{2n}$, the dihedral group of order $2n$.

\item If $p\neq 2,3$, $N\rtimes \left\langle \begin{pmatrix} 1 & \zeta_4 \\ 1 & -\zeta_4\end{pmatrix}\right\rangle$, where $N$ is generated by $\begin{pmatrix} 0 & -1 \\ 1 & 0\end{pmatrix}$ and $\begin{pmatrix} 0 & 1 \\ 1 & 0\end{pmatrix}$. This group is isomorphic to $A_4$.

\item If $p \neq 2,3$, the group generated by the group in (3) and $\begin{pmatrix} \zeta_4 & 0 \\ 0 & 1\end{pmatrix}$. This group is isomorphic to $S_4$.

\item If $p\neq 2,3,5$, the group generated by $\begin{pmatrix} \zeta_5 & 0 \\ 0 & 1\end{pmatrix}$ and $\begin{pmatrix} 1 & (1-\zeta_5-\zeta_5^{-1}) \\ 1 & -1\end{pmatrix}$. This group is isomorphic to $A_5$.
\eenum
\end{thm}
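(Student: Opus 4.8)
The plan is to carry the classical Klein classification of finite subgroups of $\PGL_2(\CC)$ into positive characteristic, using \cref{p1fp} to control fixed points. Let $G\le\PGL_2(k)$ be a finite $p$-regular subgroup and $n=|G|$, so $p\nmid n$. Two structural facts come first. Since $G$ has no element of order $p$, \cref{p1fp} and its proof make every non-identity $g\in G$ conjugate to a non-scalar diagonal matrix, so $g$ has exactly two fixed points on $\PP^1_k$ and any cyclic subgroup of order $m$ is $\PGL_2(k)$-conjugate to the diagonal group $\mu_m(k)$. Second, the stabilizer $G_x$ of any $x\in\PP^1_k$ is cyclic of order prime to $p$: after conjugating $x$ to $\infty$, $G_x$ lies in the Borel $U\rtimes T$ with $U\cong\mathbb{G}_a$ and $T\cong\mathbb{G}_m$, and since every nonzero element of $\mathbb{G}_a(k)$ has order $p$ while $p\nmid|G_x|$, we have $G_x\cap U=1$, so $G_x$ embeds into $T$.

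Next I would run the orbit count. Let $P\subseteq\PP^1_k$ be the finite $G$-set of points fixed by some non-identity element, with orbits $O_1,\dots,O_r$ of stabilizer orders $e_1,\dots,e_r$, so $|O_i|=n/e_i$. Double-counting the pairs $(g,x)$ with $1\ne g$, $gx=x$ gives
\[
2(n-1)=\sum_{i=1}^r\frac n{e_i}(e_i-1),\qquad\text{i.e.}\qquad 2-\frac2n=\sum_{i=1}^r\Bigl(1-\frac1{e_i}\Bigr),
\]
and since each term lies in $[\tfrac12,1)$ while the left side lies in $[1,2)$, necessarily $r\in\{2,3\}$. If $r=2$ then $e_1=e_2=n$, so $G$ fixes two points, lies in the torus through them, and is the cyclic group of case (1). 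If $r=3$ the only solutions up to reordering are $(e_1,e_2,e_3)=(2,2,m)$ with $n=2m$, $(2,3,3)$ with $n=12$, $(2,3,4)$ with $n=24$, or $(2,3,5)$ with $n=60$; moreover $G$ is generated by generators of one point-stabilizer from each orbit, whose orders are $e_1,e_2,e_3$ and whose product is $1$ (because $\PP^1_k\to\PP^1_k/G\cong\PP^1_k$ is a tame cover branched at the three images of the $O_i$, and the prime-to-$p$ tame fundamental group of $\PP^1_k$ minus three points is topologically generated by loops of product $1$). Hence $G$ is a quotient of the triangle group $\Delta(e_1,e_2,e_3)$, and these signatures give exactly the finite polyhedral groups of orders $2m,12,24,60$, so $G\cong D_{2m},A_4,S_4,A_5$ respectively.

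It then remains to pin down the conjugacy class. In case (2) one diagonalizes the index-$2$ cyclic subgroup to $\mu_m(k)$; an element outside it swaps that subgroup's two fixed points, hence inverts the torus, and after a diagonal conjugation (which normalizes $\mu_m(k)$) is the class of $\begin{pmatrix}0&1\\1&0\end{pmatrix}$ — here $p\ne 2$ is automatic from $|G|=2m$. In cases (3)–(5) one diagonalizes a chosen cyclic subgroup and solves for a second matrix realizing the standard presentation of $A_4$, $S_4$, $A_5$, arriving at the generators listed in the theorem; the restrictions $p\ne 2,3$ (resp. $p\ne 2,3,5$) are exactly $p\nmid|G|$ and are what make the roots of unity $\zeta_4,\zeta_5$ and the presentation behave as over $\CC$.

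The main obstacle I expect is this last step for (3)–(5): checking that the displayed matrices generate copies of $A_4$, $S_4$, $A_5$ and that \emph{every} such subgroup of $\PGL_2(k)$ is conjugate to the displayed one, i.e.\ ruling out exotic small-characteristic embeddings. This can be attacked directly, or more structurally by lifting $G\hookrightarrow\PGL_2(k)$ along $\GL_2(k)\twoheadrightarrow\PGL_2(k)$ — possible since $p\nmid|G|$ kills the obstruction on a Schur cover $\widehat G$, whose order stays prime to $p$ for just these characteristics — which reduces the problem to the classical classification of $2$-dimensional irreducible representations of $\widehat G$ and their uniqueness up to equivalence.
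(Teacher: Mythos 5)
The paper offers no proof of this statement: it is imported verbatim from Faber's classification (\cite[Theorem C]{fab}), so there is nothing internal to compare your argument against. Your sketch is the standard Klein-style classification transported to characteristic $p$, and its skeleton is sound: $p$-regularity makes every nontrivial element semisimple with exactly two fixed points (consistent with \cref{p1fp}); point stabilizers are cyclic because they meet the unipotent radical of a Borel trivially; the double count $2-\tfrac{2}{n}=\sum_i(1-\tfrac{1}{e_i})$ forces the signatures $(n,n)$, $(2,2,m)$, $(2,3,3)$, $(2,3,4)$, $(2,3,5)$; and the prime-to-$p$ tame fundamental group of $\PP^1_k$ minus three points identifies $G$ with the corresponding finite triangle group, giving $\ZZ/n\ZZ$, $D_{2m}$, $A_4$, $S_4$, $A_5$. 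The normalization in case (2) via diagonalizing the rotation subgroup and scaling the anti-diagonal involution is also correct (any anti-diagonal matrix is an involution in $\PGL_2(k)$, and a diagonal conjugation turns it into $\begin{pmatrix}0&1\\1&0\end{pmatrix}$ while normalizing $\mu_m(k)$).

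The genuine gap is the step you flag yourself: for cases (3)--(5) you never verify that the displayed matrices generate copies of $A_4$, $S_4$, $A_5$, nor that \emph{every} subgroup of $\PGL_2(k)$ of that isomorphism type is a conjugate of the displayed one; you only name two strategies. Both strategies are viable --- the lift exists because the preimage of $G$ in $\operatorname{SL}_2(k)$ is a finite group of order $2|G|$, still prime to $p$, the group algebra of that binary cover is semisimple with the same character data as over $\CC$, and its two faithful $2$-dimensional irreducibles differ by an automorphism of the cover and hence have the same image subgroup, which yields uniqueness of the conjugacy class --- but as written this is a plan rather than a proof, and it is precisely the part of the theorem (conjugacy, not isomorphism type) that the paper actually uses in the proof of \cref{tamep1}. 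A minor point to police as well: in the $(2,2,m)$ branch you need $m\ge 2$ (automatic, since orbits in your set $P$ have $e_i\ge 2$) so that $G$ is noncyclic and the element outside the rotation subgroup genuinely swaps its two fixed points.
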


The main utility in the statement of \cref{tamep1ref} is that it classifies groups up to conjugacy rather than isomorphism. Since we are interested in specific occurrences of $G$ as a subgroup of $\PGL_2(k)$, this additional information greatly simplifies the proof of the following:

\begin{thm}\label{tamep1}
Let $k$ be an algebraically closed field of characteristic $p$, $S$ a finite set of closed points of $\PP^1_k$, and $G$ a finite $p$-regular group. There are only finitely many $G$-actions on $\PP^1_k$ with stabilized locus $S$.
\end{thm}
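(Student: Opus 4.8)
The plan is to invoke the conjugacy classification of \cref{tamep1ref} to reduce to a fixed finite list of model subgroups of $\PGL_2(k)$, and then, for each model, to bound the number of its $\PGL_2(k)$-conjugates whose stabilized locus equals $S$. First I would dispose of degenerate cases: if $G$ is trivial the unique $G$-action has empty stabilized locus, and if $G$ is nontrivial then by \cref{p1fp} no nonidentity element of a $G$-action is fixed point free, so no $G$-action has empty stabilized locus; thus the statement is immediate unless $G\neq 1$ and $S\neq\emptyset$, which I assume henceforth. A $G$-action on $\PP^1_k$ is a subgroup $H\subseteq\Aut(\PP^1_k)=\PGL_2(k)$ with $H\cong G$; since $|G|$ is prime to $p$ this $H$ is $p$-regular, so \cref{tamep1ref} lets me write $H=gH_0g^{-1}$ with $g\in\PGL_2(k)$ and $H_0$ one of the listed groups. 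For a fixed $G$ only finitely many listed groups are isomorphic to $G$ (the integer parameters in families (1)--(2) are determined by $|G|$), so it suffices to fix one such $H_0$ and show that there are finitely many subgroups of the form $gH_0g^{-1}$ whose stabilized locus is $S$.

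Write $S_0$ for the stabilized locus of $H_0$; it is finite and nonempty (as $H_0\neq 1$), and the stabilized locus of $gH_0g^{-1}$ is precisely $g(S_0)$, since the fixed locus of a conjugate is the conjugate of the fixed locus. The key structural point I would establish is that the pointwise stabilizer $T_0\subseteq\PGL_2(k)$ of the set $S_0$ is contained in the normalizer $N_{\PGL_2(k)}(H_0)$. If $|S_0|\geq 3$ then $T_0$ is trivial, since a nonidentity element of $\PGL_2(k)$ fixes at most two points of $\PP^1_k$. If $|S_0|=2$, then, because $H_0$ is $p$-regular, every nonidentity $h\in H_0$ has exactly two fixed points by \cref{p1fp}, necessarily equal to $S_0$; hence $H_0$ and $T_0$ both lie in the unique maximal torus of $\PGL_2(k)$ with fixed locus $S_0$, so they commute and $T_0\subseteq Z_{\PGL_2(k)}(H_0)\subseteq N_{\PGL_2(k)}(H_0)$.

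Now I would count. The subgroups of the form $gH_0g^{-1}$ are in bijection with the cosets $g\,N_{\PGL_2(k)}(H_0)$, and such a subgroup has stabilized locus $S$ exactly when $g(S_0)=S$. The set $\{g\in\PGL_2(k): g(S_0)=S\}$ is empty unless $|S|=|S_0|$, in which case it is a union of at most $|S_0|!$ cosets of $T_0$: for each bijection $\beta\colon S_0\to S$, the set of $g$ restricting to $\beta$ on $S_0$ is either empty or a single coset of $T_0$. Since $T_0\subseteq N_{\PGL_2(k)}(H_0)$, each such coset of $T_0$ lies inside a single coset of $N_{\PGL_2(k)}(H_0)$, so there are at most $|S_0|!=|S|!$ subgroups $gH_0g^{-1}$ with stabilized locus $S$. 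Summing over the finitely many admissible $H_0$ proves the theorem.

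The one place requiring care is the containment $T_0\subseteq N_{\PGL_2(k)}(H_0)$ in the case $|S_0|=2$: one must observe that $p$-regularity forces every nonidentity element of $H_0$ to have the same two fixed points, hence $H_0$ to sit inside a maximal torus, which is precisely what makes the one-dimensional group $T_0$ normalize (indeed centralize) $H_0$. Everything else is bookkeeping; an alternative, more computational route would be to read $S_0$ and $N_{\PGL_2(k)}(H_0)$ off the explicit list in \cref{tamep1ref} case by case, but the uniform argument above avoids that.
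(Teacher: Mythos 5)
Your proof is correct and follows essentially the same route as the paper: reduce via Faber's conjugacy classification to finitely many model subgroups $H_0$, then bound the conjugates with stabilized locus $S$ by rigidity (an automorphism of $\PP^1_k$ is determined by three points) when $|S_0|\geq 3$, and by the observation that the diagonal torus normalizes the cyclic model when $|S_0|=2$. Your uniform packaging via the containment $T_0\subseteq N_{\PGL_2(k)}(H_0)$ is a clean, slightly more careful rendering of the paper's two-case argument (the paper only treats $S=S_0=\{0,\infty\}$ explicitly in the cyclic case), but the underlying ideas are identical.
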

\begin{proof}
Let $G$ be a $p$-regular group. Since $G$ is $p$-regular, every subgroup corresponding to a $G$-action must be conjugate to one of the five groups listed in \cref{tamep1ref}. It therefore suffices to show that there are finitely many groups conjugate to each of the listed groups with the same stabilized locus. One can check directly that every group but the cyclic group in (1) has at least three stabilized points. Since an element in $\Aut(\PP^1_k)$ is determined by the image of three points, only finitely many elements can act as a permutation on $S$ when $|S|\geq 3$. This implies except in case (1) that there are finitely many conjugate groups with the same stabilized points.

We are left to consider the cyclic group in (1). The stabilized points of this group are 0 and $\infty$. The matrices which act as a permutation on the set $\{0,\infty\}$ are of the form $\begin{pmatrix} \alpha & 0 \\ 0 & 1\end{pmatrix}$ or $\begin{pmatrix} 0 & 1 \\ \beta & 0 \end{pmatrix}$ where $\alpha,\beta\in k^\times$. However, conjugating by a matrix of either form merely acts as an automorphism of the group, so $\begin{pmatrix} \mu_n(k) & 0 \\ 0 & 1\end{pmatrix}$ has no conjugate subgroups with the same stabilized points.
\end{proof}

In the $p$-irregular case we will make use of the following analog of \cref{tamep1ref}:

\begin{thm} (\cite[Theorem 6.1]{fab})\label{wildp1ref}
Let $k$ be an algebraically closed field of characteristic $p$. Every finite $p$-irregular subgroup of $\PGL_2(k)$ is conjugate to one of the following groups:

\benum
\item $\PSL_2(\FF_{p^n})$ for some $n\in\NN$.

\item $\PGL_2(\FF_{p^n})$ for some $n\in\NN$.

\item The group $\begin{pmatrix} 1 & \Gamma \\ 0 & 1\end{pmatrix}\rtimes\begin{pmatrix} \mu_n(k) & 0 \\ 0 & 1\end{pmatrix}$. Here $n\in\NN$ is prime to $p$ and $\Gamma$ is an additive subgroup of $k$ of rank $m\in\NN$ such that $\mu_n(k)\subseteq \Gamma$ and $\mu_n(k)\cdot \Gamma\subseteq \Gamma$. Two such subgroups of $\PGL_2(k)$ of the same order are conjugate if and only if $\Gamma'=\alpha\cdot\Gamma$ for some $\alpha\in k^\times$.

\item If $p=2$, the group $\begin{pmatrix} \mu_n(k) & 0 \\ 0 & 1 \end{pmatrix}\rtimes\left\langle\begin{pmatrix}0 & 1 \\ 1 & 0\end{pmatrix}\right\rangle$. Here $n\in\NN$ is odd and greater than one. This group is isomorphic to the dihedral group of order $2n$.

\item If $p=3$, the group generated by $\begin{pmatrix} \zeta_5 & 0 \\ 0 & 1\end{pmatrix}$ and $\begin{pmatrix} 1 & (1-\zeta_5-\zeta_5^{-1}) \\ 1 & -1\end{pmatrix}$. This group is isomorphic to $A_5$.
\eenum
\end{thm}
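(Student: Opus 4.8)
\emph{Proof sketch.} We indicate how this follows from the classical strategy behind Dickson's classification of finite subgroups of $\PGL_2$ in characteristic $p$, exploiting the action on $\PP^1_k$. Since $G$ is $p$-irregular, $p\mid |G|$, so by Cauchy's theorem $G$ contains an element of order $p$; its image in $\PGL_2(k)$ is unipotent, hence has exactly one fixed point on $\PP^1_k$ by \cref{p1fp}. Fix a $p$-Sylow subgroup $P\leq G$. The standard common-eigenvector argument (carried out over a finite field of definition) shows $P$ fixes a point of $\PP^1_k$, so $P$ lies in the Borel subgroup $B=U\rtimes T$ stabilizing that point, where $U\cong(k,+)$ is the unipotent radical and $T\cong k^\times$ a maximal torus; as $P$ is a $p$-group and $T$ has no $p$-torsion, $P\subseteq U$. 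Hence $P$ is elementary abelian, $P\cong(\ZZ/p\ZZ)^m$, and all its nonidentity elements share one fixed point $x_0$. Since $U\cap G$ is a $p$-group containing $P$ it equals $P$, and an element of $G$ normalizes $P$ iff it fixes $x_0$; so $N_G(P)=G\cap B=P\rtimes C$ with $C$ cyclic of order $n$ prime to $p$ (embedded in $T$, by Schur--Zassenhaus).

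\emph{The normal case.} If $P\trianglelefteq G$, then $G=N_G(P)=P\rtimes C$ lies in $B$. Put $x_0=\infty$; conjugating by a unipotent moves the second fixed point of $C$ to $0$, and rescaling by a diagonal matrix puts $1$ into the additive group $\Gamma\subseteq k$ corresponding to $P$. This realizes $G$ as in case (3): the stability $\mu_n(k)\cdot\Gamma\subseteq\Gamma$ is precisely conjugation-invariance of $P$ under $C$, and $\mu_n(k)\subseteq\Gamma$ follows from $1\in\Gamma$. For the conjugacy criterion, any $h\in\PGL_2(k)$ carrying one such group to another of equal order must send $x_0=\infty$ to $\infty$ (the unique fixed point of the Sylow) and, when $n>1$, also fix $0$ (the second fixed point of $C$), hence be diagonal; conjugation by $\mathrm{diag}(\alpha,1)$ replaces $\Gamma$ by $\alpha\Gamma$, and conversely every such rescaling is a conjugacy.

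\emph{The non-normal case --- the main obstacle.} Suppose $P$ is not normal, and let $\Omega\subseteq\PP^1_k$ be the finite set of fixed points of the $p$-Sylow subgroups of $G$. Distinct Sylows have distinct fixed points (a Sylow is recovered from its fixed point as the unipotent part of the stabilizer), so $|\Omega|=[G:N_G(P)]\equiv 1\pmod p$ with $|\Omega|\geq p+1\geq 3$, and $G$ acts faithfully on $\Omega$ with $P$ acting freely by ``translations'' on $\Omega\setminus\{x_0\}$. The bulk of the proof --- and by far the hardest step --- is to reprove the relevant part of Dickson's theorem by analyzing this permutation action: the generic configuration is $2$-transitive, in which case $G$ is a Zassenhaus group and the classification of such groups forces $G\cong\PSL_2(\FF_{p^n})$ or $\PGL_2(\FF_{p^n})$ acting naturally on $\Omega=\PP^1(\FF_{p^n})$, giving cases (1)--(2); the remaining (non-$2$-transitive) configurations are eliminated by a counting argument on elements of order prime to $p$, collapsing to $|P|=2$ with $p=2$ (the dihedral groups of case (4)) and to the single coincidence $p=3$, $G\cong A_5$ (case (5)). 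Transporting $\Omega$ back into $\PP^1_k$ and conjugating to standard position yields the stated matrix forms.

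\emph{The characteristic restrictions.} The side conditions in (4)--(5) record the overlap with the $p$-regular list of \cref{tamep1ref}. A dihedral group $D_{2n}$ is $p$-irregular only for $p=2$, and then $n$ must be odd: for $n$ even and at least $4$, $\PGL_2(k)$ in characteristic $2$ has no element of order $4$, while $D_4\cong(\ZZ/2\ZZ)^2$ already appears in (3). And $A_5$, though $p$-irregular for $p\in\{2,3,5\}$, equals $\PSL_2(\FF_{2^2})$ when $p=2$ and $\PSL_2(\FF_5)$ when $p=5$ --- so it yields a separate entry only for $p=3$. I expect everything except the non-normal case to be routine given \cref{p1fp} and the Borel--torus structure of $\PGL_2(k)$; that case is essentially Dickson's theorem and is where all the real work lies.
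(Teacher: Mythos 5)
The first thing to note is that the paper does not prove this statement at all: it is imported verbatim as \cite[Theorem 6.1]{fab}, so there is no internal proof to compare against. Judged on its own terms, your outline follows the classical Dickson-style strategy (which is also essentially Faber's): locate a Sylow $p$-subgroup inside the unipotent radical of a Borel, split into the cases where it is or is not normal, and in the normal case read off the semidirect products of item (3). That normal case, including the identification $N_G(P)=G\cap B=P\rtimes C$ and the conjugacy criterion $\Gamma'=\alpha\Gamma$ (which, as you implicitly use, only needs $h$ upper triangular, not diagonal --- the off-diagonal entry does not affect the unipotent parts), is correct and adequately detailed.

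The genuine gap is the non-normal case, which is precisely where cases (1), (2), (4), and (5) live. You correctly set up the faithful action of $G$ on the set $\Omega$ of Sylow fixed points, with $|\Omega|\equiv 1\pmod p$ and $P$ acting freely on $\Omega\setminus\{x_0\}$, but then the identification of $G$ with $\PSL_2(\FF_{p^n})$ or $\PGL_2(\FF_{p^n})$ is delegated wholesale to ``the classification of Zassenhaus groups,'' and the elimination of the non-$2$-transitive configurations to an unspecified ``counting argument.'' Neither step is routine: the first invokes a deep classification (and you must also verify $2$-transitivity rather than assume it is ``the generic configuration''), and the second is exactly the combinatorial core of Dickson's theorem. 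Moreover, the matrix normal forms in (1)--(2) require showing that the abstract isomorphism is realized by an honest $\PGL_2(k)$-conjugation onto the subgroup defined over $\FF_{p^n}$, which does not follow formally from identifying the permutation action. A smaller issue: your argument that $n$ must be odd in case (4) only rules out $4\mid n$ (via the absence of elements of order $4$ in characteristic $2$); for $n\equiv 2\pmod 4$, $n\geq 6$, you instead need that a central involution of $D_{2n}$ would force the whole group into the centralizer of a unipotent element, i.e.\ into $U$, which is elementary abelian. As written, the proposal proves item (3) and sketches a plausible but incomplete road map for the rest; citing Dickson or Faber for the non-normal case, as the paper does, is the honest resolution.
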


Again the classification of subgroups up to conjugacy greatly simplifies the proof of the following theorem:
\begin{thm}\label{wildp1}
Let $k$ be an algebraically closed field of characteristic $p$, $S$ a finite set of closed points of $\PP^1_k$, and $G$ a finite $p$-irregular group. There are infinitely many $G$-actions on $\PP^1_k$ with stabilized locus $S$ if and only if $G$ is isomorphic to $(\ZZ/p\ZZ)^m$. Moreover, each $(\ZZ/p\ZZ)^m$-action on $\PP^1_k$ has a unique stabilized point $P$. After fixing $P$, choosing such an action is equivalent to choosing a rank $m$ additive subgroup of $k$.
\end{thm}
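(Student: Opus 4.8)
The plan is to read everything off Faber's classification \cref{wildp1ref}. For the ``if'' direction together with the two closing assertions: a $(\ZZ/p\ZZ)^m$-action is a finite $p$-subgroup of $\PGL_2(k)$, and among the groups listed in \cref{wildp1ref} the only abelian $p$-groups are the groups $\begin{pmatrix} 1 & \Gamma \\ 0 & 1\end{pmatrix}$ of item (3) with $n=1$ and $\Gamma$ of rank $m$, so every such action is conjugate to one of these. By \cref{p1fp} each non-identity element $\begin{pmatrix} 1 & \gamma \\ 0 & 1\end{pmatrix}$ with $\gamma\neq0$ fixes only $\infty$, hence $\begin{pmatrix} 1 & \Gamma \\ 0 & 1\end{pmatrix}$ has stabilized locus $\{\infty\}$ and every $(\ZZ/p\ZZ)^m$-action has a single stabilized point. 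I would then fix a point $P$ and some $g_0\in\Aut(\PP^1_k)$ with $g_0(\infty)=P$ and check that $\Gamma\mapsto g_0\begin{pmatrix} 1 & \Gamma \\ 0 & 1\end{pmatrix}g_0^{-1}$ is a bijection from rank $m$ additive subgroups of $k$ onto the $(\ZZ/p\ZZ)^m$-actions with stabilized locus $\{P\}$: injectivity is clear, and for surjectivity one writes such an action as $g_1\begin{pmatrix} 1 & \Gamma_1 \\ 0 & 1\end{pmatrix}g_1^{-1}$ via \cref{wildp1ref}, observes that $g_1(\infty)=P$ forces $g_0^{-1}g_1$ to fix $\infty$, and notes that conjugating $\begin{pmatrix} 1 & \Gamma_1 \\ 0 & 1\end{pmatrix}$ by an element fixing $\infty$ produces $\begin{pmatrix} 1 & \alpha\Gamma_1 \\ 0 & 1\end{pmatrix}$ for some $\alpha\in k^\times$. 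Since $k$ is infinite and $m\geq1$, there are infinitely many rank $m$ additive subgroups of $k$, so when $|S|=1$ there are infinitely many such actions (and when $|S|\neq1$ there are none).

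For the ``only if'' direction I would prove the contrapositive: if $G$ is $p$-irregular and $G\not\cong(\ZZ/p\ZZ)^m$, then for every finite $S$ there are only finitely many $G$-actions with stabilized locus $S$. In \cref{wildp1ref} a type (3) group has normal $p$-Sylow $\begin{pmatrix} 1 & \Gamma \\ 0 & 1\end{pmatrix}$ and is a $(\ZZ/p\ZZ)^m$ exactly when $n=1$, while the groups of types (1), (2), (4), (5) have non-normal $p$-Sylow subgroups (a direct check on $\PSL_2(\FF_{p^n})$, $\PGL_2(\FF_{p^n})$, the dihedral groups, and $A_5$). Hence, by \cref{wildp1ref}, either every $G$-action is conjugate to a model $H_\Gamma=\begin{pmatrix} 1 & \Gamma \\ 0 & 1\end{pmatrix}\rtimes\begin{pmatrix} \mu_n(k) & 0 \\ 0 & 1\end{pmatrix}$ with $n\geq2$, or $G$ is isomorphic to one of the finitely many entries of types (1), (2), (4), (5) of order $|G|$ and every $G$-action is conjugate to such an entry. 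Two facts drive the count: (i) each relevant model group $H_0$ has at least three stabilized points --- for (1), (2) the $\geq p+1\geq3$ points of $\PP^1(\FF_{p^n})$ all have non-trivial stabilizer, for (4) the points $0,\infty$ together with the fixed points of the $n$ reflections, for (5) the orbit of any stabilized point has at least five points ($A_5$ does not act freely, by Riemann--Hurwitz), and for $H_\Gamma$ with $n\geq2$ the points $\infty$, $0$, and any non-zero point of $\tfrac{1}{1-\zeta}\Gamma$ for $\zeta\in\mu_n(k)\setminus\{1\}$; and (ii) the setwise stabilizer in $\PGL_2(k)$ of a set of at least three points is finite, since an element of $\PGL_2(k)$ is determined by three point-values.

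Now suppose $H=gH_0g^{-1}$ is a $G$-action with stabilized locus $S$; then $g$ maps the stabilized locus of $H_0$ bijectively onto $S$. In the types (1), (2), (4), (5) branch $H_0$ ranges over a finite set, and for fixed $H_0$ the conjugating elements carrying $\operatorname{stab}(H_0)$ to $S$ form a single coset of the finite setwise stabilizer of $\operatorname{stab}(H_0)$, so there are finitely many $H$. In the type (3) branch with $H_0=H_\Gamma$, $n\geq2$, I would use that $\Gamma\subseteq\operatorname{stab}(H_\Gamma)$: for $\gamma\in\Gamma$ and $\zeta\in\mu_n(k)\setminus\{1\}$ one has $\gamma(1-\zeta)\in\Gamma$, so $\gamma=\tfrac{\gamma(1-\zeta)}{1-\zeta}$ is the fixed point of $\begin{pmatrix} \zeta & \gamma(1-\zeta) \\ 0 & 1\end{pmatrix}\in H_\Gamma$; in particular $1\in\mu_n(k)\subseteq\Gamma$, so $\{0,1,\infty\}\subseteq\operatorname{stab}(H_\Gamma)$. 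Then $g(0),g(1),g(\infty)\in S$, which leaves at most $|S|^3$ possibilities for this triple and hence for $g$; and once $g$ is fixed, $\operatorname{stab}(H_\Gamma)=g^{-1}(S)$ exhibits $\Gamma$ as one of the finitely many subsets of the fixed finite set $g^{-1}(S)$. In all cases there are only finitely many $G$-actions with stabilized locus $S$, which proves the contrapositive.

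I expect the type (3) case with $n\geq2$ to be the main obstacle: unlike the other types it produces infinitely many pairwise non-conjugate subgroups of $\PGL_2(k)$ isomorphic to a fixed $G$ as $\Gamma$ varies, so finiteness of the number of conjugacy classes is unavailable, and the argument must instead rest on the observation that $\Gamma$ itself lies in the stabilized locus and is therefore determined once $S$ is. The remaining ingredients --- the three-stabilized-point bound for each model group, the structure of finite $p$-subgroups extracted from \cref{wildp1ref}, and the bookkeeping of conjugating elements --- are routine.
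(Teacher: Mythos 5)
Your proposal is correct and follows essentially the same route as the paper: reduce to Faber's classification (\cref{wildp1ref}), observe that every model group other than $\begin{pmatrix} 1 & \Gamma \\ 0 & 1\end{pmatrix}$ has at least three stabilized points and invoke three-point rigidity of $\PGL_2(k)$, then identify the $(\ZZ/p\ZZ)^m$-actions with a fixed stabilized point with the rank $m$ additive subgroups of $k$ via conjugation by elements fixing $\infty$. The one place you diverge is the type (3) case with $n\geq 2$, where you worry that the infinitude of conjugacy classes (as $\Gamma$ varies) breaks the per-conjugacy-class count, and you repair this by showing $\Gamma\subseteq\operatorname{stab}(H_\Gamma)$ so that $\Gamma$ is pinned down by $S$ once the conjugating element is. That argument is valid, but the extra work is unnecessary: since a group permutes its own stabilized locus, \emph{every} $G$-action with stabilized locus $S$ is contained in the setwise stabilizer of $S$ in $\PGL_2(k)$, which is finite whenever $|S|\geq 3$; this bounds all such actions at once, independently of how many conjugacy classes occur, and is the uniform statement the paper is appealing to when it says ``as in the proof of \cref{tamep1}.''
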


\begin{proof}
Let $G$ be a $p$-irregular group. Since $G$ is $p$-irregular, every group corresponding to a $G$-action must be conjugate to one of the five groups listed in \cref{wildp1ref}. As in the proof of \cref{tamep1}, if $|S|\geq 3$, there will be only finitely many $G$-actions with stabilized locus $S$.

One can check immediately that the $A_5$ group from (5) and the dihedral group from (4) have at least three stabilized points.

To see that $\PSL_2(\FF_{p^n})$ and $\PGL_2(\FF_{p^n})$ have at least three stabilized points, note that irrespective of the field, both groups contain the matrices $\begin{pmatrix}1 & 0 \\1 & 1\end{pmatrix}, \begin{pmatrix}1 & 1 \\0 & 1\end{pmatrix}$, and $\begin{pmatrix}0 & 1 \\-1 & 0\end{pmatrix}.$
The first fixes 0, the second $\infty$, and the third the square roots of -1. This yields three stabilized points in characteristic 2 and four in higher characteristics.

Therefore we have only case (3) left to consider. First, consider the case where $n>1$. Then any subgroup of $\PGL_2(k)$ isomorphic to $G$ is conjugate to a group of the form $\begin{pmatrix} 1 & \Gamma \\ 0 & 1\end{pmatrix}\rtimes\begin{pmatrix} \mu_n(k) & 0 \\ 0 & 1\end{pmatrix}$ where $\mu_n(k)\subseteq \Gamma$ and $\mu_n(k)\cdot \Gamma\subseteq \Gamma$. An element of the form $\begin{pmatrix} \zeta_n & 0 \\ 0 & 1\end{pmatrix}$ has 0 and $\infty$ as fixed points. An element of the form $\begin{pmatrix} \zeta_n & g \\ 0 & 1\end{pmatrix}$ for $g$ nonzero has $\frac{g}{1-\zeta_n}\neq 0$ as a fixed point. Since there are at least three stabilized points, in this case there are finitely many $G$-actions with stabilized locus $S$ as well.

Finally, suppose $n=1$. Then $G$ is conjugate to a group of the form $\begin{pmatrix} 1 & \Gamma \\ 0 & 1\end{pmatrix}$ such that $\FF_p\subseteq \Gamma$ and $\Gamma$ is a rank $m$ additive subgroup of $k$. Note that in this case $G\cong (\ZZ/p\ZZ)^m$. The unique stabilized point of such a group is $\infty$. Since $\begin{pmatrix} \alpha & \beta \\ 0 & 1\end{pmatrix}$ where $\alpha,\beta\in k$ and $\alpha\neq 0$ are precisely the matrices which fix $\infty$, every subgroup of $\PGL_2(k)$ isomorphic to $G$ and with stabilized point $\infty$ is a conjugate of $\begin{pmatrix} 1 & \Gamma \\ 0 & 1\end{pmatrix}$ by such a matrix. These conjugates have the form $\begin{pmatrix} 1 & \alpha\cdot\Gamma \\ 0 & 1\end{pmatrix}$.

Conversely, let $\Gamma$ be any rank $m$ additive subgroup of $k$. One immediately sees that the only stabilized point of the group $\begin{pmatrix} 1 & \Gamma \\ 0 & 1\end{pmatrix}$ is $\infty$. Choose a nonzero element $\gamma\in\Gamma$. Then $\FF_p\subseteq \gamma^{-1}\cdot \Gamma$ and $\begin{pmatrix} 1 & \Gamma \\ 0 & 1\end{pmatrix}$ is conjugate to $\begin{pmatrix} 1 & \gamma^{-1}\cdot \Gamma \\ 0 & 1\end{pmatrix}$ via conjugation by $\begin{pmatrix} \gamma & 0 \\ 0 & 1\end{pmatrix}$. Therefore, the subgroups of $\PGL_2(k)$ isomorphic to $(\ZZ/p\ZZ)^m$ and with stabilized point $\infty$ are of the form $\begin{pmatrix} 1 & \Gamma \\ 0 & 1\end{pmatrix}$ where $\Gamma$ is any rank $m$ additive subgroup of $k$.
\end{proof}

By \cref{subgroupstocovers}, $G$-actions with stabilized locus $S$ correspond to equivalence classes of $G$-Galois branched covers with source $X$ and ramification locus $S$. Therefore, the final case of \cref{maingalthm} follows as a corollary of \cref{tamep1} and \cref{wildp1}:

\begin{cor}
Let $k$ be an algebraically closed field, $S$ a finite set of closed points of $\PP^1_k$, and $G$ a finite group. There are infinitely many equivalence classes of $G$-Galois branched covers with source $\PP^1_k$ and ramification locus $S$ if and only if all the following hold:

\benum
\item $\cha(k)=p>0$

\item $G\cong (\mathbb{Z}/p\ZZ)^m$, where $m\in\NN$

\item $|S|=1$.
\eenum

When these conditions hold, equivalence classes of such covers with ramification locus $S$ are in correspondence with rank $m$ additive subgroups of $k$.
\end{cor}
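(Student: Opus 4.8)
The plan is to deduce the statement directly from the correspondence of \cref{subgroupstocovers} together with the two classification-based results \cref{tamep1} and \cref{wildp1}. By \cref{subgroupstocovers}, equivalence classes of $G$-Galois branched covers with source $\PP^1_k$ and ramification locus $S$ are in bijection with $G$-actions on $\PP^1_k$ whose stabilized locus is exactly $S$, so it suffices to determine when there are infinitely many such $G$-actions.

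First I would treat the $p$-regular case, which by the convention of this section also subsumes $\cha(k)=0$ (where every finite group is vacuously $p$-regular, and Faber's classification \cref{tamep1ref} reduces to the five complex cases). If $p\nmid|G|$, then \cref{tamep1} yields only finitely many $G$-actions with stabilized locus $S$. Consequently, if there are infinitely many equivalence classes of covers, then necessarily $\cha(k)=p>0$ and $p\mid|G|$, i.e.\ $G$ is $p$-irregular; this establishes condition (1) and reduces us to applying \cref{wildp1}.

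Now suppose $\cha(k)=p>0$ and $G$ is $p$-irregular. By \cref{wildp1}, $G$ admits infinitely many actions on $\PP^1_k$ precisely when $G\cong(\ZZ/p\ZZ)^m$, and moreover every $(\ZZ/p\ZZ)^m$-action has a unique stabilized point. Hence if there are infinitely many $G$-actions with stabilized locus $S$, then $G\cong(\ZZ/p\ZZ)^m$ and, since each such action has a single point for its stabilized locus and that locus equals $S$, we conclude $|S|=1$; this gives conditions (2) and (3). For the converse, under (1)--(3) the group $G\cong(\ZZ/p\ZZ)^m$ (with $m\geq1$, as forced by $p$-irregularity) is indeed $p$-irregular, and the $n=1$ case of \cref{wildp1} says that, after fixing the unique point of $S$, the $G$-actions with stabilized locus $S$ correspond bijectively to the rank $m$ additive subgroups $\Gamma\subseteq k$. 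Since $k$ is algebraically closed it contains $\FF_{p^n}$ for every $n$, hence is infinite-dimensional over $\FF_p$, so there are infinitely many such $\Gamma$; this proves the equivalence and simultaneously delivers the asserted correspondence with rank $m$ additive subgroups of $k$.

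Since \cref{tamep1} and \cref{wildp1} do the real work, this amounts to bookkeeping and I do not expect a substantial obstacle. The one point deserving care is the extraction of ``$|S|=1$'': it is not part of the statement of \cref{wildp1} but follows from its ``moreover'' clause, once one observes that producing even a single $G$-action with stabilized locus $S$ forces $S$ to be that action's unique stabilized point. One must also remember to route the $\cha(k)=0$ case through the $p$-regular branch, where it correctly lands on the finite side.
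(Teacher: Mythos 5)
Your proposal is correct and follows the same route as the paper: translate via \cref{subgroupstocovers} to $G$-actions with stabilized locus $S$, then split into the $p$-regular and $p$-irregular cases and cite \cref{tamep1} and \cref{wildp1}. The paper's own proof is a two-sentence deduction of exactly this form; your extra bookkeeping (routing $\cha(k)=0$ through the $p$-regular branch, extracting $|S|=1$ from the ``moreover'' clause, and noting that $k$ has infinitely many rank $m$ additive subgroups) is all sound and left implicit in the paper.
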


\end{document}